\newtheorem{theorem}{Theorem}
\newtheorem{lemma}{Lemma}
\newtheorem{definition}{Definition}
\newtheorem{remark}{Remark}
\newtheorem{example}{Example}
\title{Zeros of quasi-orthogonal $q$-Laguerre polynomials}
\author{Pinaki Prasad Kar$^{a}$, Priyabrat Gochhayat$^{b}$}
\affil{Department of  Mathematics, Sambalpur University, Sambalpur, 768019, Odisha, India
\newline $^{a}$\textit{E-mail}: \textup{pinakipk@gmail.com}, $^{b}$\textit{E-mail}: \textup{pgochhayat@gmail.com}}
\begin{document}
\maketitle

\begin{abstract}
We investigate the interlacing of zeros of polynomials
of different degrees
within the sequences of $q$-Laguerre polynomials
$\left\{\tilde{L}_n^{(\delta)}(z;q)\right\}_{n=0}^{\infty}$ characterized by $\delta\in(-2,-1).$
The interlacing of zeros of quasi-orthogonal polynomials $\tilde{L}_n^{(\delta)}(z;q)$
with those of the orthogonal polynomials
$\tilde{L}_m^{(\delta+t)}(z;q), m,n\in\mathbb{N}, t\in\{1,2\}$ is also considered.
New bounds for the least zero of
the (order $1$) quasi-orthogonal $q$-Laguerre polynomials are derived.
\end{abstract}

\noindent {\it{MSC:}} 33D15; 33D45

\vspace{.2in} \noindent {\it {Keywords:}}  Zeros, Interlacing, Stieltjes interlacing,
Quasi-orthogonal polynomials, $q$-Laguerre polynomials.

\section{Introduction}\label{Section-1}
\noindent Let $\{t_n(z)\}_{n=0}^{\infty},$ with  deg($t_n)=n,n\in\mathbb{N},$ be a sequence of orthogonal polynomials
and $w(z)$ be the corresponding weight function on a support $(c,d).$
Then a well known fact about the zeros of $t_n(z)$ is that,
they are real, simple and lie in $(c,d).$
Moreover, the zeros of $t_n(z)$ and $t_{n-1}(z)$ interlace,
if there is exactly one zero of $t_{n-1}(z)$ in between any two consecutive zeros of $t_n(z).$
Similarly, the zeros of the two equal degree polynomials $t_n(z)$ and $u_n(z)$ are interlacing if
there is exactly one zero of $u_n(z)$ (equivalently, $t_n(z)$)
in between any two consecutive zeros of $t_n(z)$ (equivalently, $u_n(z)$).
Another interlacing property given by Stieltjes (cf. \cite[Theorem 3.3.3]{Szego-Book}),
usually called Stieltjes interlacing, states that,
for $m,n\in\mathbb{N}$ with $m<n-1,$
if there exists $m$ open intervals with endpoints at the consecutive zeros of $t_n(z)$
then there is exactly one zero of $u_m(z)$ in each of the $m$ open intervals.
It follows that, if there is at least a zero of $u_m(z)$ lies outside the interval
$(t_{1,n},t_{n,n});$ where $t_{1,n}$ and $t_{n,n},$ respectively,
are the smallest and the largest zero of $t_n(z),$
the Stieltjes interlacing will not retain between
the zeros of $u_m(z)$ and $t_n(z)$ with $n-1>m.$

The departure of zeros of polynomials from the interval of orthogonality
leads to the following definition of quasi-orthogonality:
\begin{definition}\label{def-quasi}\normalfont
Suppose $\{t_n(z)\}_{n=0}^\infty$ is a sequence of polynomials and deg($t_n)=n,n\in\mathbb{N},$
and $w(z)$ is the corresponding weight function on a support $[c,d].$
Then $\{t_n(z)\}_{n=0}^\infty$ is called as quasi-orthogonal sequence of order $r<n$ if
\begin{equation}\label{s1e1}
\int_c^d z^i t_n(z) w(z) dz
\begin{cases}
=0; & i=0,1,\cdots,n-r-1,\\
\neq 0; & i=n-r.
\end{cases}
\end{equation}
\end{definition}
If $t_n(z)$ is a quasi-orthogonal polynomial of order $r$ with respect to $w(z)>0$ on $(c,d),$
then there are minimum $(n-r)$ real and distinct zeros of $t_n(z)$ in $(c,d).$
In \eqref{s1e1}, letting $r=0$ we find that $\{t_n(z)\}_{n=0}^\infty$ is an orthogonal polynomial sequence.
Riesz \cite{Riesz} was the first mathematician who studied quasi-orthogonality of order $1$
while order $2$ was derived by Fej\'er \cite{Fejer}.
Following which, Shohat \cite{Shohat}, Chihara \cite{Chihara},
Dickinson \cite{Dickinson}, Draux \cite{Draux}, etc. considered various general cases.
For more information on quasi-orthogonality, we refer the works by
Brezinski et al. \cite{Brezinski2004,Brezinski2019}, Joulak \cite{Joulak} and the references therein.
Recent contribution on the interlacing of zeros of different quasi-orthogonal polynomials
found in \cite{Driver2017,Driver2018,Driver2016,DriverM2016,Driver2015,Alta2013}.
More recently, in the context of $q$-calculus,
the quasi-orthogonality of classical orthogonal polynomials on $q$-linear and $q$-quadratic lattices
is discussed by Tcheutia et al. \cite{Tcheutia2018}
while Kar et al. in \cite{Kar2020} considered the polynomials that are not present in the
$q$-Askey scheme of hypergeometric polynomials to study their quasi-orthogonality.
Pertinently, very less known on quasi-orthogonality of $q$-polynomials.

In this work, we study the interlacing properties of zeros of $q$-Laguerre polynomials.
Therefore, it is worthy to mention the orthogonality and quasi-orthogonality of these polynomials.

The (monic) $q$-Laguerre polynomials of degree $n$ is defined as
\begin{equation*}
\tilde{L}_n^{(\delta)}(z;q)
=\displaystyle{(-1)^n (q^{\delta+1};q)_n\over q^{n (\delta+n)}}\
_1\phi_1\left(\begin{matrix}q^{-n}\\ q^{\delta+1}\end{matrix};q,-q^{\delta+n+1}z\right), \qquad \delta>-1.
\end{equation*}
These are orthogonal with respect to the weight $\displaystyle{z^{\delta}\over (-z;q)_\infty}$ on $(0,\infty).$
For any value of $\delta,$ the sequence of $q$-Laguerre polynomials is defined by the three term recurrence relation
\begin{equation}\label{s1e3}
\tilde{L}_n^{(\delta)}(z;q)=(z-a_n)\tilde{L}_{n-1}^{(\delta)}(z;q)
-{(1-q^{n-1})(1-q^{\delta+n-1}) \over q^{2\delta+4n-5}} \tilde{L}_{n-2}^{(\delta)}(z;q);\ n=2,3,\cdots,
\end{equation}
where
\begin{equation}\label{s1e4}
a_n = {1-q^{n}+q(1-q^{\delta+n-1})\over q^{\delta+2n-1}},
\end{equation}
and $\tilde{L}_0^{(\delta)}(z;q)\equiv 1,$ $\tilde{L}_1^{(\delta)}(z;q)=z-\displaystyle{1-q^{\delta+1}\over q^{\delta+1}}.$

Tcheutia et al. in \cite[Theorem 2.17]{Tcheutia2018},
proved that for fixed $\delta\in\mathbb{R}$ with $\delta\in(-k-1,-k),$
$\left\{\tilde{L}_n^{(\delta)}(z;q)\right\}_{n=0}^{\infty}$
is quasi-orthogonal of order $k$ with respect to
$\displaystyle{z^{\delta+k}\over (-z;q)_{\infty}}.$
There are minimum $n-k$ real and distinct zeros in $(0,\infty).$
Throughout this paper we denote
$\{z_{i,n}\}_{i=1}^{n}, \{y_{i,n}\}_{i=1}^{n}$ and $\{x_{i,n}\}_{i=1}^{n},$ respectively,
the zeros of $\tilde{L}_n^{(\delta)}(z;q),$ $\tilde{L}_n^{(\delta+1)}(z;q)$
and $\tilde{L}_n^{(\delta+2)}(z;q)$ for fixed $\delta\in(-2,-1)$ and $0<q<1.$

The following result obtained by substituting $\delta$ by $\delta+1$ in \cite[Theorem 2.18]{Tcheutia2018}
is used in our work.
\begin{lemma}\label{s1l1}
The zeros of $\tilde{L}_{n}^{(\delta)}(z;q)$
and $\tilde{L}_{n}^{(\delta+1)}(z;q)$ are interlacing as
\begin{equation}
z_{1,n}<0<y_{1,n}<z_{2,n}<y_{2,n}<\cdots<z_{n,n}<y_{n,n}.\label{s1e5}
\end{equation}
\end{lemma}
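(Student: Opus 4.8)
The plan is to bootstrap everything from the orthogonal member of the family. Since $\delta+1>-1$, the polynomial $\tilde{L}_n^{(\delta+1)}(z;q)$ is genuinely orthogonal on $(0,\infty)$, so its zeros $\{y_{i,n}\}_{i=1}^{n}$ are real, simple and positive, and by the classical separation theorem they strictly interlace with the zeros of $\tilde{L}_{n-1}^{(\delta+1)}(z;q)$. Rather than analyse the quasi-orthogonal polynomial $\tilde{L}_n^{(\delta)}(z;q)$ from scratch, I would view it as a one-step perturbation of $\tilde{L}_n^{(\delta+1)}(z;q)$ through a contiguous relation, and then read off the sign pattern of $\tilde{L}_n^{(\delta)}$ at the points $y_{i,n}$.

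First I would establish a mixed contiguous relation of the form
\begin{equation*}
\tilde{L}_n^{(\delta)}(z;q)=\tilde{L}_n^{(\delta+1)}(z;q)+\gamma_n\,\tilde{L}_{n-1}^{(\delta+1)}(z;q),
\end{equation*}
with an explicit constant $\gamma_n=\gamma_n(\delta,q)$; comparing leading coefficients shows the degree-$n$ and degree-$(n-1)$ parts are consistent with monicity. Such a relation can be produced either by comparing the three-term recurrences \eqref{s1e3}–\eqref{s1e4} for the two consecutive parameter values, or directly from the $_1\phi_1$ representation via a $q$-contiguous identity (the classical monic-Laguerre analogue gives $\gamma_n=n>0$). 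The point I need is not the closed form but that $\gamma_n$ keeps a fixed sign for $\delta\in(-2,-1)$ and $0<q<1$, which I would confirm by writing $\gamma_n$ in terms of $q$-shifted factorials such as $1-q^{\delta+n}$.

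The interlacing on $(0,\infty)$ then follows by a sign-change argument. Evaluating the relation at a zero $y_{i,n}$ of $\tilde{L}_n^{(\delta+1)}$ annihilates the first term and leaves $\tilde{L}_n^{(\delta)}(y_{i,n};q)=\gamma_n\,\tilde{L}_{n-1}^{(\delta+1)}(y_{i,n};q)$. Because the zeros of $\tilde{L}_{n-1}^{(\delta+1)}$ strictly separate the $y_{i,n}$, the factor $\tilde{L}_{n-1}^{(\delta+1)}(y_{i,n};q)$ alternates in sign with $i$; with $\gamma_n$ of fixed sign, so does $\tilde{L}_n^{(\delta)}(y_{i,n};q)$. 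By the intermediate value theorem $\tilde{L}_n^{(\delta)}$ has a zero in each open interval $(y_{i,n},y_{i+1,n})$, $i=1,\dots,n-1$, yielding the chain $y_{1,n}<z_{2,n}<y_{2,n}<\cdots<z_{n,n}<y_{n,n}$ and accounting for $n-1$ of the $n$ zeros.

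It remains to place the last zero and show it is negative. Since $\tilde{L}_n^{(\delta)}$ is monic it tends to $+\infty$ as $z\to\infty$, while at $y_{n,n}$ the identity gives $\tilde{L}_n^{(\delta)}(y_{n,n};q)$ the sign of $\gamma_n\,\tilde{L}_{n-1}^{(\delta+1)}(y_{n,n};q)$, the second factor being positive because $y_{n,n}$ exceeds every zero of the monic $\tilde{L}_{n-1}^{(\delta+1)}$; comparing signs rules out a zero in $(y_{n,n},\infty)$, so the stray zero lies to the left of $y_{1,n}$. To force it below $0$, I would compare the sign of $\tilde{L}_n^{(\delta)}$ just below $y_{1,n}$ with its value at the origin: from the series, $\tilde{L}_n^{(\delta)}(0;q)=(-1)^n(q^{\delta+1};q)_n/q^{n(\delta+n)}$, whose sign is controlled by the single factor $1-q^{\delta+1}$, and this factor becomes negative precisely when $\delta<-1$. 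Matching this sign with the value of $\tilde{L}_n^{(\delta)}$ below $y_{1,n}$ shows there is no zero in $(0,y_{1,n})$, hence $z_{1,n}<0<y_{1,n}$ and \eqref{s1e5} holds. I expect the main obstacle to be exactly this sign bookkeeping across the band $\delta\in(-2,-1)$, $0<q<1$: establishing that $\gamma_n$ has one fixed sign, and that the positivity of the relevant $q$-factorials is governed by the lone factor that flips as $\delta$ crosses $-1$, which is the analytic origin of the escaped negative zero. A shortcut, consistent with the remark preceding the statement, is to invoke \cite[Theorem 2.18]{Tcheutia2018} under $\delta\mapsto\delta+1$, which packages precisely this argument.
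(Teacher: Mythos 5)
Your proposal is correct in substance, but it takes a different route from the paper, because the paper does not prove this lemma at all: Lemma \ref{s1l1} is imported verbatim from Tcheutia--Jooste--Koepf \cite[Theorem 2.18]{Tcheutia2018} under the substitution $\delta\mapsto\delta+1$, which is exactly the ``shortcut'' you mention in your last sentence. Your self-contained argument is the standard order-$1$ quasi-orthogonality decomposition, and every step of it is sound: the existence of the relation $\tilde{L}_n^{(\delta)}(z;q)=\tilde{L}_n^{(\delta+1)}(z;q)+\gamma_n\,\tilde{L}_{n-1}^{(\delta+1)}(z;q)$ is the $q$-analogue of the classical $\hat{L}_n^{(\alpha)}=\hat{L}_n^{(\alpha+1)}+n\hat{L}_{n-1}^{(\alpha+1)}$, and the one ingredient you leave unverified does hold: comparing leading coefficients of the standardly normalized identity $q^nL_n^{(\delta)}(z;q)=L_n^{(\delta+1)}(z;q)-L_{n-1}^{(\delta+1)}(z;q)$ gives $\gamma_n=(1-q^n)/q^{\delta+2n}>0$ for all $0<q<1$ (check: $n=1$ gives $\gamma_1=(1-q)/q^{\delta+2}$, consistent with $\tilde{L}_1^{(\delta)}(z;q)=z-(1-q^{\delta+1})/q^{\delta+1}$). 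Two small points should be made explicit rather than implicit. First, the claim that no zero lies in $(y_{n,n},\infty)$ or in $(0,y_{1,n})$ is a parity argument: after the $n-1$ sign changes inside $(y_{1,n},y_{n,n})$ exactly one real zero remains, and since $\tilde{L}_n^{(\delta)}$ has the same sign at $y_{n,n}$ as at $+\infty$, and the same sign at $0$ as at $y_{1,n}^-$ (both equal to $(-1)^{n+1}$, using $\gamma_n>0$, the monicity of $\tilde{L}_{n-1}^{(\delta+1)}$, and $(q^{\delta+1};q)_n<0$ for $\delta\in(-2,-1)$), a single simple zero cannot sit in either interval. Second, the positivity of $\gamma_n$ is not cosmetic: its sign decides on which side of $(0,\infty)$ the stray zero escapes, so the lemma genuinely hinges on it. What your approach buys is a proof that does not outsource the key fact to \cite{Tcheutia2018}; what the paper's citation buys is brevity, at the cost of leaving the mechanism (the decomposition and the sign of $\gamma_n$) invisible to the reader.
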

\begin{remark}\normalfont\label{s1r1}
Replacing $n$ by $n+1$ in \cite[Theorem 2.18]{Tcheutia2018}, we also obtain
\begin{equation}
z_{1,n+1}<0<y_{1,n}<z_{2,n+1}<y_{2,n}<\cdots<z_{n,n+1}<y_{n,n}<z_{n+1,n+1}.\label{s1e6}
\end{equation}
\end{remark}
In the next section, we show that the zeros of the $q$-Laguerre polynomials
of consecutive degree are not interlacing.
The interlacing results between the zeros of the quasi-orthogonal
and the orthogonal $q$-Laguerre polynomials of the same,
consecutive and non-consecutive degree with different parameter values are also discussed
in section \ref{Section-2}.
In section \ref{Section-3}, we examine the common zeros of polynomials that are not co-prime.
The last section is devoted to the derivation of the inner and outer bound for the
only negative zero of the quasi-orthogonal $q$-Laguerre polynomial.
\section{Zeros of $\boldsymbol{\tilde{L}_{n}^{(\delta)}(z;q)}$ and $\boldsymbol{\tilde{L}_{m}^{(\delta+t)}(z;q)}$ for $\boldsymbol{m\leq n+1,}$ $\boldsymbol{m,n\in\mathbb{N}}$ and $\boldsymbol{t\in\{0,1,2\}}$}\label{Section-2}
Our first theorem proves that
the interlacing does not take place between
the zeros of $\tilde{L}_{n}^{(\delta)}(z;q)$ and $\tilde{L}_{n+1}^{(\delta)}(z;q).$
\begin{theorem}\label{s2t1}
The zeros of $\tilde{L}_{n}^{(\delta)}(z;q)$ and $\tilde{L}_{n+1}^{(\delta)}(z;q)$
are placed in the following order:
\[z_{1,n}<z_{1,n+1}<0<z_{2,n+1}<z_{2,n}<\cdots<z_{n,n}<z_{n+1,n+1}.\]
\end{theorem}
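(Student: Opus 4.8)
The plan is to argue by induction on $n$, using the interlacings of Lemma~\ref{s1l1} and Remark~\ref{s1r1} to confine the two sets of zeros, and the three-term recurrence \eqref{s1e3} to resolve their relative order through a sign count.

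First I would record the coarse structure. By \eqref{s1e5} the zeros of $\tilde{L}_n^{(\delta)}(z;q)$ satisfy $z_{1,n}<0<y_{1,n}$ and $y_{k-1,n}<z_{k,n}<y_{k,n}$ for $k=2,\dots,n$, while by \eqref{s1e6} the zeros of $\tilde{L}_{n+1}^{(\delta)}(z;q)$ satisfy $z_{1,n+1}<0<y_{1,n}$, $y_{k-1,n}<z_{k,n+1}<y_{k,n}$ for $k=2,\dots,n$, and $z_{n+1,n+1}>y_{n,n}$. Hence for every $k$ with $2\le k\le n$ the two zeros $z_{k,n}$ and $z_{k,n+1}$ already lie in the common open interval $(y_{k-1,n},y_{k,n})$, both $z_{1,n}$ and $z_{1,n+1}$ are negative, and $z_{n+1,n+1}$ sits to the right of $z_{n,n}$. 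It remains only to decide, inside each common interval, which of $z_{k,n},z_{k,n+1}$ is the smaller, and to order the two negative zeros; the full chain then assembles from these local comparisons together with the separating markers $y_{k,n}$.

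Second, I would extract sign information from \eqref{s1e3}. Evaluating the recurrence at index $n+1$ at a zero $z_{k,n}$ of $\tilde{L}_n^{(\delta)}(z;q)$ gives $\tilde{L}_{n+1}^{(\delta)}(z_{k,n};q)=-c_{n+1}\,\tilde{L}_{n-1}^{(\delta)}(z_{k,n};q)$, where $c_{n+1}=(1-q^{n})(1-q^{\delta+n})q^{-(2\delta+4n-1)}$ is the coefficient coming from \eqref{s1e3} at index $n+1$; for $\delta\in(-2,-1)$ and $0<q<1$ one checks that $c_{n+1}>0$ whenever $n\ge 2$, so the sign of $\tilde{L}_{n+1}^{(\delta)}(z_{k,n};q)$ is opposite to that of $\tilde{L}_{n-1}^{(\delta)}(z_{k,n};q)$. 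The sign of $\tilde{L}_{n-1}^{(\delta)}(z_{k,n};q)$ is in turn read off from the position of $z_{k,n}$ among the zeros of $\tilde{L}_{n-1}^{(\delta)}(z;q)$, which is precisely the statement of the theorem one degree lower, i.e.\ the induction hypothesis. On the other hand, the sign of $\tilde{L}_{n+1}^{(\delta)}(z;q)$ at the markers $y_{k,n}$ is obtained directly from \eqref{s1e6} by counting how many zeros $z_{j,n+1}$ exceed $y_{k,n}$. Comparing the sign of $\tilde{L}_{n+1}^{(\delta)}(z;q)$ at $z_{k,n}$ with its sign at the right endpoint $y_{k,n}$ then locates the unique zero $z_{k,n+1}$ in $(y_{k-1,n},y_{k,n})$ relative to $z_{k,n}$: equal signs force $z_{k,n+1}<z_{k,n}$, which is what occurs for $k\ge 2$. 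The same comparison carried out to the left of $y_{1,n}$, where $\tilde{L}_{n+1}^{(\delta)}(z;q)$ has the single zero $z_{1,n+1}$, yields opposite signs at $z_{1,n}$ and $y_{1,n}$ and therefore $z_{1,n}<z_{1,n+1}<0$, completing the ordering of the negative zeros.

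Finally, the base case $n=1$ I would verify by hand from the explicit expressions for $\tilde{L}_1^{(\delta)}(z;q)$ and $\tilde{L}_2^{(\delta)}(z;q)$ computed via \eqref{s1e3}--\eqref{s1e4}, noting that here $c_2<0$ because $q^{\delta+1}>1$; this sign is exactly what produces the single negative zero and starts the induction. The main obstacle is the sign bookkeeping of the inductive step: one must track the parities $(-1)^{n-k}$ arising from the zero counts and make them match at $z_{k,n}$ and at $y_{k,n}$ for all $k$ simultaneously, with special care for the negative zero and for the endpoint indices $k=1$, $k=n$ together with the extra zero $z_{n+1,n+1}$. Confirming the strict inequality $c_{n+1}>0$ for all $n\ge 2$ over the whole range $\delta\in(-2,-1)$, $0<q<1$ is the other point that must be checked to keep the argument non-circular.
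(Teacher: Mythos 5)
Your proposal is correct in outline, and the parity bookkeeping you flag as the main obstacle does check out, but your route is genuinely different from the paper's. The paper does not use induction at all: it invokes Moak's mixed contiguous relation $\tilde{L}_{n+1}^{(\delta)}(z;q)=-\frac{1-q^{\delta+n+1}}{q^{\delta+2n+1}}\tilde{L}_{n}^{(\delta)}(z;q)+z\,\tilde{L}_{n}^{(\delta+1)}(z;q)$, evaluates it at pairs of consecutive zeros of $\tilde{L}_{n}^{(\delta)}$ so that the first term drops out, and reads off from Lemma~\ref{s1l1} that $\tilde{L}_{n+1}^{(\delta)}$ has an even number of zeros in $(z_{1,n},z_{2,n})$ and an odd number in each $(z_{i,n},z_{i+1,n})$, $i\ge 2$; a global count of the $n+1$ zeros (one negative, one beyond $y_{n,n}$) then forces exactly two zeros into $(z_{1,n},z_{2,n})$ and one into each remaining gap. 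You instead work with the pure three-term recurrence \eqref{s1e3} at index $n+1$, evaluate at a \emph{single} zero $z_{k,n}$, and determine the sign of $\tilde{L}_{n+1}^{(\delta)}(z_{k,n};q)$ from the position of $z_{k,n}$ among the zeros of $\tilde{L}_{n-1}^{(\delta)}$ supplied by the induction hypothesis, comparing against the sign at the markers $y_{k,n}$ from \eqref{s1e6}. I verified the parities: with $c_{n+1}>0$ for $n\ge 2$ one gets $\operatorname{sign}\tilde{L}_{n+1}^{(\delta)}(z_{k,n};q)=(-1)^{n-k+1}$ for $k\ge 2$, matching $\operatorname{sign}\tilde{L}_{n+1}^{(\delta)}(y_{k,n};q)=(-1)^{n+1-k}$, which places $z_{k,n+1}<z_{k,n}$ in $(y_{k-1,n},y_{k,n})$; at $k=1$ the signs disagree, giving $z_{1,n}<z_{1,n+1}$. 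What the paper's argument buys is brevity and the absence of a base case; what yours buys is that it avoids the mixed $(\delta,\delta+1)$ relation and uses only the standard recurrence plus the two quoted interlacing facts, at the cost of an induction with a hand-checked case $n=1$ (where indeed the recurrence coefficient changes sign because $q^{\delta+1}>1$). To make your sketch a complete proof you would need to write out the parity computation explicitly and note that the induction hypothesis also supplies the coprimality of $\tilde{L}_{n}^{(\delta)}$ and $\tilde{L}_{n-1}^{(\delta)}$ needed to guarantee $\tilde{L}_{n+1}^{(\delta)}(z_{k,n};q)\neq 0$.
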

\begin{proof}
Evaluating the mixed $q$-contiguous relation \cite[(4.14)]{Moak1981}
\begin{equation*}
\tilde{L}_{n+1}^{(\delta)}(z;q)=-{1-q^{\delta+n+1}\over q^{\delta+2n+1}}\tilde{L}_{n}^{(\delta)}(z;q)+z\tilde{L}_{n}^{(\delta+1)}(z;q)
\end{equation*}
at the consecutive zeros $z_{i,n}$ and $z_{i+1,n}, i\in\{1,\cdots,n-1\},$ of $\tilde{L}_{n}^{(\delta)}(z;q),$ we obtain
\begin{equation}
\tilde{L}_{n+1}^{(\delta)}(z_{i,n};q)\tilde{L}_{n+1}^{(\delta)}(z_{i+1,n};q)
=z_{i,n}z_{i+1,n} \tilde{L}_{n}^{(\delta+1)}(z_{i,n};q)\tilde{L}_{n}^{(\delta+1)}(z_{i+1,n};q).\label{s2t1e1}
\end{equation}
Now from \eqref{s1e5}, $z_{1,n}z_{2,n}<0$ and $z_{i,n}z_{i+1,n}>0$ for $i\in\{2,\cdots,n-1\}$
while for $i\in\{1,\cdots,n-1\},$ $\tilde{L}_{n}^{(\delta+1)}(z_{i,n};q)\tilde{L}_{n}^{(\delta+1)}(z_{i+1,n};q)<0.$
We conclude from \eqref{s2t1e1} that $\tilde{L}_{n+1}^{(\delta)}(z_{1,n};q) \tilde{L}_{n+1}^{(\delta)}(z_{2,n};q)>0$
and $\tilde{L}_{n+1}^{(\delta)}(z_{i,n};q) \tilde{L}_{n+1}^{(\delta)}(z_{i+1,n};q)<0$ for each $i\in\{2,\cdots,n-1\}.$
This implies that $\tilde{L}_{n+1}^{(\delta)}(z;q)$ has an even number of zeros in $(z_{1,n},z_{2,n})$
and an odd number of zeros in $(z_{i,n},z_{i+1,n})$ for each $i\in\{2,\cdots,n-1\}.$
Hence, $\tilde{L}_{n+1}^{(\delta)}(z;q)$ has minimum $n-2$ zeros in $(z_{2,n},z_{n,n}),$
plus its least zero $z_{1,n+1}<0$ and from \eqref{s1e5} and \eqref{s1e6},
its greatest zero $z_{n+1,n+1}>y_{n,n}>z_{n,n}.$
Thus, we have located $n$ zeros of $\tilde{L}_{n+1}^{(\delta)}(z;q)$ and still we have to find one more zero.
Since there are even number of zeros of $\tilde{L}_{n+1}^{(\delta)}(z;q)$
in $(z_{1,n},z_{2,n})$ where $z_{1,n}<0<z_{2,n}$ for any $n\in\mathbb{N},$
we must have either no zero or two zeros in this single interval.
Because we have precisely one negative zero of $\tilde{L}_{n+1}^{(\delta)}(z;q)$ for $n\in\mathbb{N},$
the only option is $z_{1,n}<z_{1,n+1}<0<z_{2,n+1}<z_{2,n}.$
This proves the theorem.
\end{proof}
It is clear from Theorem \ref{s2t1} that,
the zeros of the quasi-orthogonal polynomials $\tilde{L}_{n}^{(\delta)}(z;q)$
and $\tilde{L}_{n+1}^{(\delta)}(z;q)$ are interlacing in the interval $(0,\infty).$
Furthermore, the zeros of $z \tilde{L}_{n}^{(\delta)}(z;q)$ and
$\tilde{L}_{n+1}^{(\delta)}(z;q)$ interlace.

The next result shows that,
the Stieltjes interlacing fails to occur between the zeros of
$\tilde{L}_{n}^{(\delta)}(z;q)$ and $\tilde{L}_{n-2}^{(\delta)}(z;q).$
\begin{theorem}\label{s2t2}
For $n\geq 3, n\in\mathbb{N},$ if $\tilde{L}_{n}^{(\delta)}(z;q)$ and $\tilde{L}_{n-2}^{(\delta)}(z;q)$ are co-prime,
then the zeros of $z(z-a_n) \tilde{L}_{n-2}^{(\delta)}(z;q)$
and $\tilde{L}_{n}^{(\delta)}(z;q)$ interlace, where $a_n$ is given in \eqref{s1e4}.
However, the Stieltjes interlacing will not take place between the zeros of
$\tilde{L}_{n}^{(\delta)}(z;q)$ and $\tilde{L}_{n-2}^{(\delta)}(z;q).$
\end{theorem}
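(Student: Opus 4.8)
The plan is to mirror the strategy of Theorem~\ref{s2t1}, but now using a three-term-style relation that connects $\tilde{L}_{n}^{(\delta)}(z;q)$ directly to $\tilde{L}_{n-2}^{(\delta)}(z;q)$. First I would locate a mixed $q$-contiguous or recurrence identity of the form $\tilde{L}_{n}^{(\delta)}(z;q) = A(z)\,\tilde{L}_{n-2}^{(\delta)}(z;q) + B(z)\,\tilde{L}_{n-1}^{(\delta+1)}(z;q)$ (or some analogue producing a factor $z(z-a_n)$), by combining the three-term recurrence \eqref{s1e3} with the mixed relation already used in Theorem~\ref{s2t1}. The goal is an identity that, when evaluated at two consecutive zeros $z_{i,n},z_{i+1,n}$ of $\tilde{L}_{n}^{(\delta)}(z;q)$, expresses the product $\tilde{L}_{n-2}^{(\delta)}(z_{i,n};q)\tilde{L}_{n-2}^{(\delta)}(z_{i+1,n};q)$ as a manifestly sign-definite multiple of a product of $\tilde{L}_{n-1}^{(\delta+1)}$ (or $\tilde{L}_{n}^{(\delta+1)}$) values whose sign we already control via Lemma~\ref{s1l1} and Remark~\ref{s1r1}.

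Next I would carry out the sign analysis. From the interlacing \eqref{s1e5} we know the signs of $\tilde{L}_{n}^{(\delta+1)}$ at consecutive zeros of $\tilde{L}_{n}^{(\delta)}$ alternate, exactly as exploited in \eqref{s2t1e1}. Feeding these into the contiguous identity should show that $\tilde{L}_{n-2}^{(\delta)}(z;q)$ changes sign in each interval $(z_{i,n},z_{i+1,n})$, $i\in\{2,\dots,n-1\}$, placing one zero of $\tilde{L}_{n-2}^{(\delta)}$ strictly between each such consecutive pair; this accounts for $n-3$ of its $n-2$ positive zeros. The extra factors $z$ and $(z-a_n)$ are then inserted precisely to absorb the two remaining zeros near the origin: $z$ supplies the separating point between the negative zero $z_{1,n}$ and the first positive zero $z_{2,n}$, while $z-a_n$, with $a_n$ from \eqref{s1e4}, supplies a node in $(z_{1,n},0)$ or $(0,z_{2,n})$. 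I would verify that $a_n$ lies in the appropriate subinterval and that $z(z-a_n)\tilde{L}_{n-2}^{(\delta)}(z;q)$, a polynomial of degree $n$, thereby has exactly one zero strictly between each consecutive pair of zeros of $\tilde{L}_{n}^{(\delta)}(z;q)$, establishing the claimed interlacing once co-primality rules out shared zeros.

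For the second (negative) assertion I would argue by contradiction with the usual counting obstruction for Stieltjes interlacing. Stieltjes interlacing would require exactly one zero of $\tilde{L}_{n-2}^{(\delta)}(z;q)$ in each of the $n-1$ open intervals determined by consecutive zeros of $\tilde{L}_{n}^{(\delta)}(z;q)$ lying inside $(z_{1,n},z_{n,n})$; but the preceding analysis shows that near the origin the pattern is disrupted—$\tilde{L}_{n-2}^{(\delta)}$ has \emph{no} zero in the interval $(z_{1,n},z_{2,n})$ straddling $0$, since its smallest positive zero exceeds $z_{2,n}$ and it has only positive zeros. Equivalently, the sign product over $(z_{1,n},z_{2,n})$ comes out positive rather than negative, violating the alternation that genuine Stieltjes interlacing forces. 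I would phrase this as: if one zero of $\tilde{L}_{n-2}^{(\delta)}(z;q)$ were in each interval, the total would exceed $n-2$, contradicting its degree; hence the interlacing must break in exactly the interval containing the origin.

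The main obstacle I anticipate is producing and verifying the precise contiguous identity yielding the factor $z(z-a_n)$ with the correct sign constant, and confirming that $a_n$ falls in the right subinterval relative to the negative zero $z_{1,n}$ and to $0$. Getting the sign of the proportionality constant right across all $i$ (and separately handling the boundary interval straddling the origin) is where the bookkeeping is delicate; once that identity is in hand, the interlacing and the failure of Stieltjes interlacing both follow from the now-standard sign-counting argument used in Theorem~\ref{s2t1}.
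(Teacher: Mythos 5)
Your overall strategy---evaluate a contiguous/recurrence relation at consecutive zeros of $\tilde{L}_{n}^{(\delta)}(z;q)$, control signs via an already-established interlacing, and count---is the same as the paper's, and the identity you are searching for is nothing more exotic than the three-term recurrence \eqref{s1e3} itself: it already contains the factor $(z-a_n)$ and links $\tilde{L}_{n}^{(\delta)}$, $\tilde{L}_{n-1}^{(\delta)}$ and $\tilde{L}_{n-2}^{(\delta)}$, with the sign of $\tilde{L}_{n-1}^{(\delta)}$ at consecutive \emph{positive} zeros of $\tilde{L}_{n}^{(\delta)}$ supplied by Theorem \ref{s2t1} (not by Lemma \ref{s1l1}, which concerns the shifted parameter). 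However, your execution has two genuine errors. First, you mislocate $a_n$. Note that $a_n>0$, and $\tilde{L}_{n-2}^{(\delta)}$ (quasi-orthogonal of order $1$ for $\delta\in(-2,-1)$) has exactly \emph{one negative zero and $n-3$ positive zeros}, not ``$n-2$ positive zeros'' and certainly not ``only positive zeros.'' There are $n-2$ gaps between the $n-1$ consecutive positive zeros $z_{2,n}<\cdots<z_{n,n}$, and the sign analysis forces a sign change of $\tilde{L}_{n-2}^{(\delta)}$ in every gap \emph{except} the one containing $a_n$; since only $n-3$ positive zeros are available, pigeonhole forces $a_n\in(z_{j,n},z_{j+1,n})$ for some $j\in\{2,\dots,n-1\}$, and $a_n$ fills precisely that skipped gap. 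Your placement of $a_n$ in $(z_{1,n},0)$ or $(0,z_{2,n})$ would leave one of the positive gaps empty and the interlacing of $z(z-a_n)\tilde{L}_{n-2}^{(\delta)}$ with $\tilde{L}_{n}^{(\delta)}$ would fail. The factor $z$ alone handles the interval $(z_{1,n},z_{2,n})$ straddling the origin.

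Second, your argument for the failure of Stieltjes interlacing rests on the same false premise (``it has only positive zeros'') and on a counting claim that does not actually contradict the definition of Stieltjes interlacing for $m=n-2<n-1$ (an empty interval among the $n-1$ gaps is permitted when $u_m$ has fewer zeros than there are gaps). The correct and much shorter reason, which the paper uses, is that $\tilde{L}_{n-2}^{(\delta)}$ has a negative zero $z_{1,n-2}$ and, by two applications of Theorem \ref{s2t1}, $z_{1,n-2}<z_{1,n-1}<z_{1,n}$; hence a zero of $\tilde{L}_{n-2}^{(\delta)}$ escapes the interval $(z_{1,n},z_{n,n})$, which is exactly the obstruction to Stieltjes interlacing recorded in the introduction.
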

\begin{proof}
Assume that $\tilde{L}_{n}^{(\delta)}(z;q)$ and $\tilde{L}_{n-2}^{(\delta)}(z;q)$ are co-prime.
Then from \eqref{s1e3}, $\tilde{L}_{n}^{(\delta)}(a_n;q)\neq 0.$
Evaluate \eqref{s1e3} at the consecutive zeros $z_{i,n}$ and $z_{i+1,n},$ $i\in\{2,\cdots,n-1\},$
of $\tilde{L}_{n}^{(\delta)}(z;q)$ to get
\begin{equation}\label{s2t2e1}
{\tilde{L}_{n-1}^{(\delta)}(z_{i,n};q)\tilde{L}_{n-1}^{(\delta)}(z_{i+1,n};q) \over
\tilde{L}_{n-2}^{(\delta)}(z_{i,n};q)\tilde{L}_{n-2}^{(\delta)}(z_{i+1,n};q)}
= {(1-q^{n-1})^2 (1-q^{\delta+n-1})^2 \over q^{4\delta+8n-10} (z_{i,n}-a_n)(z_{i+1,n}-a_n)}.
\end{equation}
For $n$ substituted by $n-1$ in Theorem \ref{s2t1},
we have the zeros of
$\tilde{L}_{n}^{(\delta)}(z;q)$ and $\tilde{L}_{n-1}^{(\delta)}(z;q)$ interlace in $(0,\infty).$
So that for every $i\in\{2,\cdots,n-1\},$ the numerator of left-hand side of \eqref{s2t2e1} is negative.
Also the right-hand side of \eqref{s2t2e1} is positive unless
$a_n \in (z_{i,n},z_{i+1,n})$ for each $i\in\{2,\cdots,n-1\}.$
Hence we deduce from \eqref{s2t2e1} that, for every $i\in\{2,\cdots,n-1\},$
$\tilde{L}_{n-2}^{(\delta)}(z_{i,n};q)$ and $\tilde{L}_{n-2}^{(\delta)}(z_{i+1,n};q)$
have different sign except may be for one pair
$z_{j,n},z_{j+1,n},$ with $z_{j,n}<a_n<z_{j+1,n}, 2\leq j\leq n-1.$
Since $n-2$ intervals are there with endpoints at the consecutive positive zeros of $\tilde{L}_{n}^{(\delta)}(z;q)$
and $\tilde{L}_{n-2}^{(\delta)}(z;q)$ has exactly $n-3$ simple positive zeros,
the zeros of $\tilde{L}_{n-2}^{(\delta)}(z;q),$ together with the point $a_n,$
interlace with the zeros of $\tilde{L}_{n}^{(\delta)}(z;q)$ in $(0,\infty).$
The given interlacing result holds true because from Theorem \ref{s2t1},
we have $z_{1,n-2}<z_{1,n}<0<z_{2,n}.$
Further, from Theorem \ref{s2t1}, $z_{1,n-2}<z_{1,n-1}<z_{1,n}<0<z_{2,n}<z_{2,n-1}<z_{2,n-2},$
the smallest zero $z_{1,n-2},$ of $\tilde{L}_{n-2}^{(\delta)}(z;q),$ lies outside $(z_{1,n},z_{n,n}),$
which shows that the Stieltjes interlacing does not hold between the zeros of
the quasi-orthogonal $q$-Laguerre polynomials
$\tilde{L}_{n}^{(\delta)}(z;q)$ and $\tilde{L}_{n-2}^{(\delta)}(z;q).$
\end{proof}
In Lemma \ref{s1l1}, we have for fixed $\delta\in(-2,-1),$
the zeros of $\tilde{L}_{n}^{(\delta)}(z;q)$ and $\tilde{L}_{n}^{(\delta+1)}(z;q)$ are interlacing.
Upon replacing $n$ by $n-1$ in Remark \ref{s1r1},
we see that the zeros of $\tilde{L}_{n}^{(\delta)}(z;q)$
and $\tilde{L}_{n-1}^{(\delta+1)}(z;q)$ interlace.
The interlacing properties between the zeros of
$\tilde{L}_{n}^{(\delta)}(z;q)$ and $\tilde{L}_{n-2}^{(\delta+1)}(z;q)$ are discussed in the next theorem.
Note that, for fixed $\delta\in(-2,-1)$ and $k\geq 1,$
the sequence $\left\{\tilde{L}_{n}^{(\delta+k)}(z;q)\right\}_{n=0}^{\infty}$
is orthogonal in $(0,\infty)$ because $\delta+k>-1.$
\begin{theorem}\label{s2t3}
For $n\geq 3, n\in\mathbb{N},$
the zeros of $\tilde{L}_{n-2}^{(\delta+1)}(z;q),$
together with the point $\displaystyle{1-q^{\delta+n}\over q^{\delta+2n-1}}=b_n$ (say),
interlace with those of $\tilde{L}_{n}^{(\delta)}(z;q)$ in $(0,\infty)$
provided these two polynomials do not have any common zeros.
\end{theorem}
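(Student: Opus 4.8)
The plan is to reproduce the mechanism of Theorem \ref{s2t2}, but now across the two parameter families $\{\tilde{L}_m^{(\delta)}\}$ and $\{\tilde{L}_m^{(\delta+1)}\}$, and the first job is to manufacture the right three-term relation. Replacing $n$ by $n-1$ in the mixed $q$-contiguous relation \cite[(4.14)]{Moak1981} gives $\tilde{L}_{n}^{(\delta)}(z;q)=-b_n\,\tilde{L}_{n-1}^{(\delta)}(z;q)+z\,\tilde{L}_{n-1}^{(\delta+1)}(z;q)$, where the coefficient of $\tilde{L}_{n-1}^{(\delta)}(z;q)$ is exactly $-b_n$; this already signals that $b_n$ is the natural extra node. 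To eliminate the remaining $\tilde{L}_{n-1}^{(\delta)}(z;q)$ I would substitute the connection relation $\tilde{L}_{m}^{(\delta)}(z;q)=\tilde{L}_{m}^{(\delta+1)}(z;q)+\tfrac{1-q^{m}}{q^{\delta+2m}}\tilde{L}_{m-1}^{(\delta+1)}(z;q)$ (a $q$-analogue of $L_m^{(\alpha-1)}=L_m^{(\alpha)}-L_{m-1}^{(\alpha)}$, obtainable from the ${}_{1}\phi_{1}$ representation or from Moak's relations) taken at $m=n-1$. After the $q$-powers collapse, the coefficient of $\tilde{L}_{n-2}^{(\delta+1)}(z;q)$ reduces to the $\delta+1$ version of the recurrence coefficient in \eqref{s1e3}, and one is left with the clean cross-family identity
\[
\tilde{L}_{n}^{(\delta)}(z;q)=(z-b_n)\,\tilde{L}_{n-1}^{(\delta+1)}(z;q)-\frac{(1-q^{n-1})(1-q^{\delta+n})}{q^{2\delta+4n-3}}\,\tilde{L}_{n-2}^{(\delta+1)}(z;q),
\]
which plays here precisely the role that \eqref{s1e3} played in Theorem \ref{s2t2}.

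Armed with this identity I would evaluate it at two consecutive positive zeros $z_{i,n}$ and $z_{i+1,n}$ of $\tilde{L}_{n}^{(\delta)}(z;q)$, where its left-hand side vanishes, and multiply the two resulting equations. This yields a product formula equating $\tilde{L}_{n-2}^{(\delta+1)}(z_{i,n};q)\,\tilde{L}_{n-2}^{(\delta+1)}(z_{i+1,n};q)$, up to the positive square of the recurrence coefficient, with $(z_{i,n}-b_n)(z_{i+1,n}-b_n)\,\tilde{L}_{n-1}^{(\delta+1)}(z_{i,n};q)\,\tilde{L}_{n-1}^{(\delta+1)}(z_{i+1,n};q)$. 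The hypothesis that $\tilde{L}_{n}^{(\delta)}(z;q)$ and $\tilde{L}_{n-2}^{(\delta+1)}(z;q)$ have no common zero guarantees $\tilde{L}_{n-2}^{(\delta+1)}(z_{i,n};q)\neq0$ for each $i$ and $b_n\neq z_{i,n}$, so every factor is strictly nonzero. By the interlacing obtained from Remark \ref{s1r1} with $n$ replaced by $n-1$, each interval $(z_{i,n},z_{i+1,n})$ contains exactly one zero of $\tilde{L}_{n-1}^{(\delta+1)}(z;q)$, so $\tilde{L}_{n-1}^{(\delta+1)}(z_{i,n};q)\,\tilde{L}_{n-1}^{(\delta+1)}(z_{i+1,n};q)<0$. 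Hence $\tilde{L}_{n-2}^{(\delta+1)}(z_{i,n};q)$ and $\tilde{L}_{n-2}^{(\delta+1)}(z_{i+1,n};q)$ have opposite signs for every consecutive pair, the single exception being the interval whose endpoints straddle $b_n$, across which they keep the same sign.

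It remains to count. Since $\delta+1>-1$, the family $\{\tilde{L}_m^{(\delta+1)}\}$ is orthogonal, so $\tilde{L}_{n-2}^{(\delta+1)}(z;q)$ has exactly $n-2$ simple zeros, all in $(0,\infty)$, while the $n-1$ positive zeros $z_{2,n}<\cdots<z_{n,n}$ of $\tilde{L}_{n}^{(\delta)}(z;q)$ bound $n-2$ open intervals; also $b_n=\tfrac{1-q^{\delta+n}}{q^{\delta+2n-1}}>0$ for $0<q<1$, $n\geq3$ and $\delta>-2$, so $b_n\in(0,\infty)$. The sign pattern forces an odd (hence positive) number of zeros of $\tilde{L}_{n-2}^{(\delta+1)}(z;q)$ into every interval not straddling $b_n$; since only $n-2$ zeros are available, a short parity-and-counting argument shows that each of the $n-2$ bounded intervals in fact receives exactly one node (a zero of $\tilde{L}_{n-2}^{(\delta+1)}(z;q)$, or the point $b_n$ for the exceptional interval), while the single leftover node is pushed into $(0,z_{2,n})$ or $(z_{n,n},\infty)$. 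Reading off this alternation shows that the $n-2$ zeros of $\tilde{L}_{n-2}^{(\delta+1)}(z;q)$ together with $b_n$ interlace with the $n-1$ positive zeros of $\tilde{L}_{n}^{(\delta)}(z;q)$ in $(0,\infty)$, which is the assertion.

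The principal obstacle is the very first step: producing the cross-family relation with the linear factor coming out as exactly $z-b_n$. All the $q$-power bookkeeping is concentrated in verifying the connection relation and its collapse, and it is this computation that singles out $b_n$ (rather than any other point) as the extra interlacing node; once the identity is secured, the product-sign argument and the count are routine transcriptions of Theorem \ref{s2t2}. A secondary point to nail down is that the lone leftover zero of $\tilde{L}_{n-2}^{(\delta+1)}(z;q)$ genuinely escapes $(z_{2,n},z_{n,n})$ rather than pairing up inside it; the exact zero count combined with the already-established interlacing of $\tilde{L}_{n}^{(\delta)}(z;q)$ with $\tilde{L}_{n-1}^{(\delta+1)}(z;q)$ forces this, and a completely analogous bookkeeping underlies the extra-node phenomenon here as in Theorem \ref{s2t2}.
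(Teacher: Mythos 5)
Your proposal is correct, and your key identity
\[
\tilde{L}_{n}^{(\delta)}(z;q)=(z-b_n)\,\tilde{L}_{n-1}^{(\delta+1)}(z;q)-\frac{(1-q^{n-1})(1-q^{\delta+n})}{q^{2\delta+4n-3}}\,\tilde{L}_{n-2}^{(\delta+1)}(z;q)
\]
checks out (I verified it against the low-degree case and against the relations used in the paper; your connection formula $\tilde{L}_{m}^{(\delta)}=\tilde{L}_{m}^{(\delta+1)}+\frac{1-q^{m}}{q^{\delta+2m}}\tilde{L}_{m-1}^{(\delta+1)}$ has the right sign). The overall mechanism --- a mixed three-term relation with linear factor $z-b_n$, products of evaluations at consecutive positive zeros, then a parity-and-count argument --- is the same as the paper's, but your decomposition is genuinely different: the paper pivots on $\tilde{L}_{n-1}^{(\delta)}(z;q)$ and carries an extra factor of $z$ on the $\tilde{L}_{n-2}^{(\delta+1)}$ term (its identity is $\tilde{L}_{n}^{(\delta)}=(z-b_n)\tilde{L}_{n-1}^{(\delta)}-\frac{1-q^{n-1}}{q^{\delta+2n-2}}\,z\,\tilde{L}_{n-2}^{(\delta+1)}$), and accordingly invokes Theorem~\ref{s2t1} for the sign of $\tilde{L}_{n-1}^{(\delta)}$ at consecutive zeros, treating the interval $(z_{1,n},z_{2,n})$ as a separate case since $\tilde{L}_{n-1}^{(\delta)}$ does \emph{not} change sign there. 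You instead eliminate entirely into the orthogonal $\delta+1$ family and invoke Remark~\ref{s1r1}, under which $\tilde{L}_{n-1}^{(\delta+1)}$ changes sign on \emph{every} interval $(z_{i,n},z_{i+1,n})$; this makes the sign analysis uniform and avoids the $z_{i,n}z_{i+1,n}$ factor altogether, at the price of having to establish the cross-family identity. One small point of precision: in your second case the paper actually pins the leftover zero of $\tilde{L}_{n-2}^{(\delta+1)}$ down to $(0,z_{2,n})$ via the $i=1$ evaluation, whereas you leave open the alternative $(z_{n,n},\infty)$ --- but either location yields the claimed alternation, so your conclusion stands.
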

\begin{proof}
Using \cite[(4.14) and (4.16)]{Moak1981} and \cite[(10)]{Kar2020}, we get
\begin{equation}\label{s2t3e1}
\tilde{L}_{n}^{(\delta)}(z;q) = \left(z-b_n\right) \tilde{L}_{n-1}^{(\delta)}(z;q)
-\left(1-q^{n-1}\over q^{\delta+2n-2}\right) z \tilde{L}_{n-2}^{(\delta+1)}(z;q).
\end{equation}
From \eqref{s2t3e1}, $z_{i,n}\neq b_n$ for any $i\in\{1,\cdots,n\},$
if not, this would contradict our supposition.
Evaluating \eqref{s2t3e1} at the consecutive positive zeros $z_{i,n}$ and $z_{i+1,n},$
$i\in\{2,\cdots,n-1\},$ of $\tilde{L}_{n}^{(\delta)}(z;q)$ yields
\begin{equation}\label{s2t3e2}
{\tilde{L}_{n-1}^{(\delta)}(z_{i,n};q)\tilde{L}_{n-1}^{(\delta)}(z_{i+1,n};q) \over
\tilde{L}_{n-2}^{(\delta+1)}(z_{i,n};q)\tilde{L}_{n-2}^{(\delta+1)}(z_{i+1,n};q)}
= {(1-q^{n-1})^2 z_{i,n} z_{i+1,n} \over q^{2\delta+4n-4} (z_{i,n}-b_n)(z_{i+1,n}-b_n)}.
\end{equation}
Clearly, \eqref{s1e5} shows that, both $z_{i,n}$ and $z_{i+1,n}$ are positive for each $i\in\{2,\cdots, n-1\}.$
Thus, the right-hand side of \eqref{s2t3e2} is positive if and only if
for each $i\in\{2,\cdots, n-1\},$ $b_n\notin (z_{i,n},z_{i+1,n}).$
Also from Theorem \ref{s2t1} we have for every $n\geq 2, n\in\mathbb{N}$ and $i\in\{2,\cdots,n-1\},$
$\tilde{L}_{n-1}^{(\delta)}(z_{i,n};q) \tilde{L}_{n-1}^{(\delta)}(z_{i+1,n};q)<0.$
If $b_n\notin (z_{i,n},z_{i+1,n}),$ for every $i\in\{2,\cdots,n-1\},$
then we deduce from \eqref{s2t3e2} that
the zeros of $\tilde{L}_{n-2}^{(\delta+1)}(z;q)$
and $\tilde{L}_{n}^{(\delta)}(z;q)$ interlace in $(0,\infty).$
Additionally, since the point $b_n$ is not in $(z_{2,n},z_{n,n}),$
the zeros of $(z-b_n) \tilde{L}_{n-2}^{(\delta+1)}(z;q)$ and
the positive zeros of $\tilde{L}_{n}^{(\delta)}(z;q)$ interlace.
On the other hand, if $b_n\in(z_{i,n},z_{i+1,n}),$ then we have $b_n\in (z_{j,n},z_{j+1,n})$ for one $j$ with $2\leq j \leq n-1.$
This implies, in $(z_{j,n},z_{j+1,n}),$ $\tilde{L}_{n-2}^{(\delta+1)}(z;q)$ has no sign change
but it changes sign in each of the remaining $n-3$ intervals with endpoints
at the consecutive positive zeros of $\tilde{L}_{n}^{(\delta)}(z;q).$
Finally, for $i=1,$ \eqref{s2t3e2} becomes
\begin{equation}\label{s2t3e3}
{\tilde{L}_{n-1}^{(\delta)}(z_{1,n};q)\tilde{L}_{n-1}^{(\delta)}(z_{2,n};q) \over
\tilde{L}_{n-2}^{(\delta+1)}(z_{1,n};q)\tilde{L}_{n-2}^{(\delta+1)}(z_{2,n};q)}
= {(1-q^{n-1})^2 z_{1,n} z_{2,n} \over q^{2\delta+4n-4} (z_{1,n}-b_n)(z_{2,n}-b_n)}.
\end{equation}
Since $b_n\in(z_{j,n},z_{j+1,n})$ for one $j,$ $j\in\{2,\cdots,n-1\},$ $b_n\notin(z_{1,n},z_{2,n}).$
We know from Theorem \ref{s2t1} that $z_{1,n-1}<z_{1,n}<0<z_{2,n}<z_{2,n-1}.$
This implies,
$\tilde{L}_{n-1}^{(\delta)}(z_{1,n};q)\tilde{L}_{n-1}^{(\delta)}(z_{2,n};q)>0.$
Hence from \eqref{s2t3e3}, $\tilde{L}_{n-2}^{(\delta+1)}(z_{1,n};q)\tilde{L}_{n-2}^{(\delta+1)}(z_{2,n};q)<0.$
Consequently, $\tilde{L}_{n-2}^{(\delta+1)}(z;q)$ has one positive zero less than $z_{2,n}.$
Thus, in each of the cases, we have the zeros of $(z-b_n) \tilde{L}_{n-2}^{(\delta+1)}(z;q)$
interlace with the positive zeros of $\tilde{L}_{n}^{(\delta)}(z;q),$
if these two polynomials do not have any common zeros.
\end{proof}
For the rest of the results in this section
we study the interlacing results for the zeros of
$\tilde{L}_{n}^{(\delta)}(z;q)$ and $\tilde{L}_{n-k}^{(\delta+2)}(z;q),$
where $\delta\in(-2,-1)$ and $k\in\{0,1,2\}.$
We now show a necessary and sufficient condition for
the zeros of the same degree polynomials $\tilde{L}_{n}^{(\delta)}(z;q)$
and $\tilde{L}_{n}^{(\delta+2)}(z;q)$ to be interlacing.
\begin{theorem}\label{s3t1}
For $n\geq 2, n\in\mathbb{N},$
the zeros of the same degree $q$-Laguerre polynomials
$\tilde{L}_{n}^{(\delta)}(z;q)$ and $\tilde{L}_{n}^{(\delta+2)}(z;q)$ interlace
if and only if the smallest positive zero of $\tilde{L}_{n}^{(\delta)}(z;q),$
that is, $z_{2,n}>\displaystyle{-{1-q^{\delta+1}\over q^{\delta+n+1}}}=c_n$ (say).
\end{theorem}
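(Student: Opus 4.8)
The plan is to compress the comparison of the two degree-$n$ polynomials into a single mixed relation and then test the sign of $\tilde{L}_{n}^{(\delta)}(z;q)$ at the zeros $x_{1,n}<\cdots<x_{n,n}$ of the \emph{genuinely orthogonal} polynomial $\tilde{L}_{n}^{(\delta+2)}(z;q)$, whose zeros are all positive. Concretely, I would first establish the contiguous relation
\[
\tilde{L}_{n}^{(\delta)}(z;q)=\frac{q^{n}(1-q^{\delta+1})}{1-q^{\delta+n+1}}\,\tilde{L}_{n}^{(\delta+2)}(z;q)+\frac{1-q^{n}}{1-q^{\delta+n+1}}\,(z-c_{n})\,\tilde{L}_{n-1}^{(\delta+2)}(z;q),
\]
in which the surviving linear factor is exactly $z-c_{n}$. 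For $\delta\in(-2,-1)$ and $0<q<1$ the first coefficient is negative while the second is positive, and this sign split is what drives the whole argument.

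To derive this relation I would iterate the parameter-lowering identity
\[
\tilde{L}_{m}^{(\delta)}(z;q)=\tilde{L}_{m}^{(\delta+1)}(z;q)+\frac{1-q^{m}}{q^{\delta+2m}}\,\tilde{L}_{m-1}^{(\delta+1)}(z;q)
\]
(which follows by combining the relations of Moak and Kar already used for \eqref{s2t3e1}) at the two levels $m=n$ and $m=n-1$, thereby writing $\tilde{L}_{n}^{(\delta)}$ as a combination of $\tilde{L}_{n}^{(\delta+2)},\tilde{L}_{n-1}^{(\delta+2)}$ and $\tilde{L}_{n-2}^{(\delta+2)}$, and then use the three-term recurrence \eqref{s1e3} at parameter $\delta+2$ to eliminate $\tilde{L}_{n-2}^{(\delta+2)}$. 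A short check of leading coefficients confirms monicity and identifies the coefficient of $\tilde{L}_{n-1}^{(\delta+2)}$ as $\frac{1-q^{n}}{1-q^{\delta+n+1}}(z-c_{n})$.

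Evaluating the relation above at a zero $x_{j,n}$ of $\tilde{L}_{n}^{(\delta+2)}(z;q)$ annihilates the first term and leaves
\[
\tilde{L}_{n}^{(\delta)}(x_{j,n};q)=\frac{1-q^{n}}{1-q^{\delta+n+1}}\,(x_{j,n}-c_{n})\,\tilde{L}_{n-1}^{(\delta+2)}(x_{j,n};q),
\]
with a positive prefactor. Since $\tilde{L}_{n-1}^{(\delta+2)}$ and $\tilde{L}_{n}^{(\delta+2)}$ are consecutive orthogonal polynomials their zeros interlace, so $\mathrm{sgn}\,\tilde{L}_{n-1}^{(\delta+2)}(x_{j,n};q)=(-1)^{n-j}$, whence $\mathrm{sgn}\,\tilde{L}_{n}^{(\delta)}(x_{j,n};q)=(-1)^{n-j}\,\mathrm{sgn}(x_{j,n}-c_{n})$. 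I would then run a case analysis on the position of $c_{n}$, recalling that $\tilde{L}_{n}^{(\delta)}$ has exactly one negative and $n-1$ positive zeros: (i) if $c_{n}<x_{1,n}$ the signs strictly alternate along $x_{1,n},\dots,x_{n,n}$, so $\tilde{L}_{n}^{(\delta)}$ has one zero in each $(x_{j,n},x_{j+1,n})$, which exhausts $z_{2,n},\dots,z_{n,n}$ and yields $z_{1,n}<x_{1,n}<z_{2,n}<\cdots<x_{n,n}$, in particular $z_{2,n}>x_{1,n}>c_{n}$; (ii) if $c_{n}>x_{n,n}$ the alternation together with the sign at $+\infty$ would force $n$ positive zeros, which is impossible; (iii) if $c_{n}\in(x_{m,n},x_{m+1,n})$ the sign of $\tilde{L}_{n}^{(\delta)}$ does not change across that one interval, while the parity at $x_{1,n}$ forces a second (necessarily positive) zero below $x_{1,n}$, giving $z_{2,n}<x_{1,n}<c_{n}$ and destroying the interlacing.

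Collecting the cases, interlacing holds precisely in case (i), that is, exactly when $c_{n}<x_{1,n}$, and the zero count shows this is equivalent to $z_{2,n}>c_{n}$, which is the assertion; the borderline $c_{n}=x_{1,n}$ is exactly the situation where the two polynomials share a zero. I expect the derivation of the displayed relation --- in particular verifying that the linear factor is precisely $z-c_{n}$ and that the two coefficients carry the stated signs --- to be the main computational obstacle, while the delicate conceptual point is case (iii): ruling out the appearance of an extra positive zero of $\tilde{L}_{n}^{(\delta)}$ below $x_{1,n}$ is what converts the full spectral picture into the single scalar inequality $z_{2,n}>c_{n}$.
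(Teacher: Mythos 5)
Your argument is correct, but it follows a genuinely different route from the paper's. The paper works with the mixed relation \eqref{s3t1e1}, evaluates it at consecutive zeros of the quasi-orthogonal polynomial $\tilde{L}_{n}^{(\delta)}(z;q)$, and reads off the sign changes of $\tilde{L}_{n}^{(\delta+2)}(z;q)$ from the interlacing of $\tilde{L}_{n}^{(\delta)}$ with $\tilde{L}_{n}^{(\delta+1)}$ supplied by Lemma \ref{s1l1}, using $z_{n,n}<x_{n,n}$ from \eqref{s3t1e3} to place the last zero. You instead test the sign of $\tilde{L}_{n}^{(\delta)}$ at the zeros of the orthogonal polynomial $\tilde{L}_{n}^{(\delta+2)}$ through the two-term identity
\[
\tilde{L}_{n}^{(\delta)}(z;q)=\frac{q^{n}(1-q^{\delta+1})}{1-q^{\delta+n+1}}\,\tilde{L}_{n}^{(\delta+2)}(z;q)+\frac{1-q^{n}}{1-q^{\delta+n+1}}\,(z-c_{n})\,\tilde{L}_{n-1}^{(\delta+2)}(z;q),
\]
which is indeed valid; in fact you need not iterate the parameter-lowering identity and then invoke the recurrence, since the same relation drops out directly by eliminating $\tilde{L}_{n}^{(\delta+1)}(z;q)$ between \eqref{s3t1e1} and \eqref{s3t3e1}, the constant $q^{n}(1-q^{\delta+1})/q^{\delta+2n+1}=-c_{n}$ being exactly what collapses the linear factor to $z-c_{n}$, and the leading coefficients sum to $1$ as you note. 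Your sign bookkeeping is right ($1-q^{\delta+1}<0$ and $1-q^{\delta+n+1}>0$ for $\delta\in(-2,-1)$, $n\geq 2$), as is $\mathrm{sgn}\,\tilde{L}_{n-1}^{(\delta+2)}(x_{j,n};q)=(-1)^{n-j}$, and the delicate step in your case (iii) is executed correctly: the sign $(-1)^{n}$ at $x_{1,n}$ against the sign $(-1)^{n-1}$ of $\tilde{L}_{n}^{(\delta)}$ on $(z_{1,n},z_{2,n})\ni 0$ forces $z_{2,n}<x_{1,n}$, which kills the interlacing. What your route buys is that all sign information comes from the classical interlacing of the consecutive orthogonal polynomials $\tilde{L}_{n-1}^{(\delta+2)}$ and $\tilde{L}_{n}^{(\delta+2)}$, the criterion first appears in the transparent form ``interlacing iff $c_{n}<x_{1,n}$,'' and your case (iii) already yields the configuration $z_{2,n}<x_{1,n}$ that the paper only extracts later in Theorem \ref{s3t2}; the cost is the extra contiguous relation to verify and the parity argument at $x_{1,n}$. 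Like the paper, you set aside the borderline $z_{2,n}=c_{n}$, which is precisely the common-zero situation treated separately in Theorem \ref{nst2}, so nothing is lost there.
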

\begin{proof}
The sequence
$\left\{\tilde{L}_{n}^{(\delta)}(z;q)\right\}_{n=0}^{\infty}$ satisfies
the mixed $q$-contiguous relation (obtained using \cite[(4.12) and (4.15)]{Moak1981})
\begin{equation}\label{s3t1e1}
z \tilde{L}_{n}^{(\delta+2)}(z;q) = \left(z-c_n\right) \tilde{L}_{n}^{(\delta+1)}(z;q)
- {1-q^{\delta+n+1}\over q^{\delta+2n+1}} \tilde{L}_{n}^{(\delta)}(z;q).
\end{equation}
We have from \eqref{s1e5} that $\tilde{L}_{n}^{(\delta)}(z;q)$ and $\tilde{L}_{n}^{(\delta+1)}(z;q)$
are co-prime for each fixed $\delta\in(-2,-1).$
Hence, it follows from \eqref{s3t1e1} that $\tilde{L}_{n}^{(\delta)}(z;q)$ and $\tilde{L}_{n}^{(\delta+2)}(z;q)$
can have at most one common zero at $z=c_n.$
If $c_n<z_{2,n},$ then since the zeros of $\tilde{L}_{n}^{(\delta+2)}(z;q)$ lie in $(0,\infty)$ and $z_{2,n}>0,$
the polynomials $\tilde{L}_{n}^{(\delta)}(z;q)$ and $\tilde{L}_{n}^{(\delta+2)}(z;q)$ are co-prime.
Evaluating \eqref{s3t1e1} at the consecutive zeros $z_{i,n}$ and $z_{i+1,n}, i\in\{1,\cdots,n-1\},$
of $\tilde{L}_{n}^{(\delta)}(z;q),$ gives
\begin{equation}\label{s3t1e2}
{\tilde{L}_{n}^{(\delta+1)}(z_{i,n};q)\tilde{L}_{n}^{(\delta+1)}(z_{i+1,n};q)\over \tilde{L}_{n}^{(\delta+2)}(z_{i,n};q)\tilde{L}_{n}^{(\delta+2)}(z_{i+1,n};q)}
= \displaystyle{z_{i,n}z_{i+1,n}\over (z_{i,n}-c_n)(z_{i+1,n}-c_n)}.
\end{equation}
From \eqref{s1e5}, $z_{1,n}z_{2,n}<0$ while for each $i\in\{2,\cdots,n-1\},$ $z_{i,n}z_{i+1,n}>0.$
Further, for every $i\in\{1,\cdots,n-1\},$ $\tilde{L}_{n}^{(\delta+1)}(z_{i,n};q)$ $\tilde{L}_{n}^{(\delta+1)}(z_{i+1,n};q)<0.$
Since $z_{1,n}<0<c_n<z_{2,n},$
we conclude from \eqref{s3t1e2} that $\tilde{L}_{n}^{(\delta+2)}(z;q)$
changes sign in $(z_{i,n},z_{i+1,n})$ for each $i\in\{1,\cdots,n-1\}.$
Consequently, $\tilde{L}_{n}^{(\delta+2)}(z;q)$ has an odd number of zeros
in $(z_{i,n},z_{i+1,n})$ for any $i\in\{1,\cdots,n-1\}.$
We note from \cite[Theorem 3]{Moak1981} that,
\begin{equation}\label{s3t1e3}
0<y_{1,n}<x_{1,n}<y_{2,n}<x_{2,n}<\cdots<y_{n,n}<x_{n,n}.
\end{equation}
Now from \eqref{s1e5} and \eqref{s3t1e3}, we have $z_{n,n}<x_{n,n},$
where $x_{n,n}$ is the largest zero of $\tilde{L}_{n}^{(\delta+2)}(z;q).$
Therefore, the zeros of $\tilde{L}_{n}^{(\delta)}(z;q)$ and $\tilde{L}_{n}^{(\delta+2)}(z;q)$ are interlacing.

If $c_n>z_{2,n},$ then since we have $z_{1,n}<0<z_{2,n}<c_n$ so from \eqref{s3t1e2},
we deduce that $\tilde{L}_{n}^{(\delta+2)}(z_{1,n};q)$ and
$\tilde{L}_{n}^{(\delta+2)}(z_{2,n};q)$ have same sign.
Thus, $\tilde{L}_{n}^{(\delta+2)}(z;q)$ has an even number of zeros in $(z_{1,n},z_{2,n})$
which confirms that the zeros of the polynomials
$\tilde{L}_{n}^{(\delta)}(z;q)$ and $\tilde{L}_{n}^{(\delta+2)}(z;q)$ are not interlacing.
This completes the proof of the theorem.
\end{proof}
In the next theorem we derive an interlacing result
between the zeros of the above considered polynomials involving the point $c_n.$
\begin{theorem}\label{s3t2}
For $n\geq 2, n\in\mathbb{N},$ let $\tilde{L}_{n}^{(\delta)}(z;q)$
and $\tilde{L}_{n}^{(\delta+2)}(z;q)$ be co-prime
and $z_{2,n}<c_n,$ where $c_n$ is defined in Theorem \ref{s3t1}.
Then the zeros of $\tilde{L}_{n}^{(\delta)}(z;q)$
and $\tilde{L}_{n}^{(\delta+2)}(z;q)$ interlace,
if $c_n\notin(x_{i,n},x_{i+1,n})$ for each $i\in\{1,\cdots,n-1\}.$
Otherwise, the zeros of $\tilde{L}_{n}^{(\delta)}(z;q),$ together with the point $c_n,$
interlace with those of $z \tilde{L}_{n}^{(\delta+2)}(z;q).$
\end{theorem}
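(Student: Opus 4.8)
The plan is to re-use the mixed $q$-contiguous relation \eqref{s3t1e1} that already powered Theorem \ref{s3t1}, but this time to read off the sign pattern of $\tilde{L}_{n}^{(\delta)}(z;q)$ on the gaps cut out by the zeros of $\tilde{L}_{n}^{(\delta+2)}(z;q)$, since the hypotheses here are phrased through the location of $c_n$ relative to the points $x_{i,n}$. First I would set $A:=\frac{1-q^{\delta+n+1}}{q^{\delta+2n+1}}$ and note that $A>0$ for $n\geq 2,\ \delta>-2$ (then $\delta+n+1>1>0$, so $q^{\delta+n+1}<1$). I would also observe that coprimality forces $c_n\neq x_{k,n}$ for every $k$: evaluating \eqref{s3t1e1} at $z=x_{k,n}$ kills the left-hand side, giving $A\,\tilde{L}_{n}^{(\delta)}(x_{k,n};q)=(x_{k,n}-c_n)\,\tilde{L}_{n}^{(\delta+1)}(x_{k,n};q)$, and $c_n=x_{k,n}$ would force $\tilde{L}_{n}^{(\delta)}(x_{k,n};q)=0$, contradicting coprimality. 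Hence $c_n$ lies strictly inside one gap $(x_{i,n},x_{i+1,n})$ or strictly outside $(x_{1,n},x_{n,n})$, and the dichotomy of the statement is a genuine trichotomy-free split.

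Next I would multiply the evaluations of \eqref{s3t1e1} at two consecutive zeros $x_{k,n},x_{k+1,n}$ of $\tilde{L}_{n}^{(\delta+2)}(z;q)$ to obtain
\begin{equation*}
\tilde{L}_{n}^{(\delta)}(x_{k,n};q)\,\tilde{L}_{n}^{(\delta)}(x_{k+1,n};q)
=\frac{(x_{k,n}-c_n)(x_{k+1,n}-c_n)}{A^{2}}\,
\tilde{L}_{n}^{(\delta+1)}(x_{k,n};q)\,\tilde{L}_{n}^{(\delta+1)}(x_{k+1,n};q).
\end{equation*}
By \eqref{s3t1e3} there is exactly one zero $y_{k+1,n}$ of $\tilde{L}_{n}^{(\delta+1)}(z;q)$ in $(x_{k,n},x_{k+1,n})$, so the last product is negative; since $A>0$, the sign of $\tilde{L}_{n}^{(\delta)}(x_{k,n};q)\tilde{L}_{n}^{(\delta)}(x_{k+1,n};q)$ is opposite to that of $(x_{k,n}-c_n)(x_{k+1,n}-c_n)$. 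Thus $\tilde{L}_{n}^{(\delta)}(z;q)$ changes sign across $(x_{k,n},x_{k+1,n})$ exactly when $c_n\notin(x_{k,n},x_{k+1,n})$, and keeps its sign there when $c_n\in(x_{k,n},x_{k+1,n})$.

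The argument then closes by counting the $n-1$ positive zeros of $\tilde{L}_{n}^{(\delta)}(z;q)$ against the $n-1$ gaps $(x_{k,n},x_{k+1,n})$, using the boundary facts $z_{1,n}<0<x_{1,n}$ and $z_{n,n}<y_{n,n}<x_{n,n}$ (from Lemma \ref{s1l1} and \eqref{s3t1e3}), so that no positive zero can exceed $x_{n,n}$. If $c_n$ avoids every gap, each gap carries an odd, hence at least one, zero of $\tilde{L}_{n}^{(\delta)}(z;q)$; the tally then forces exactly one per gap with none below $x_{1,n}$, giving the interlacing $z_{1,n}<x_{1,n}<z_{2,n}<\cdots<z_{n,n}<x_{n,n}$. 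If instead $c_n\in(x_{j,n},x_{j+1,n})$ for a unique $j$, that gap carries an even number and the other $n-2$ gaps an odd number of zeros; the count $n-1$ then admits only the allocation in which the $j$-th gap is \emph{empty}, every other gap holds exactly one zero, and exactly one positive zero, the smallest $z_{2,n}$, is displaced below $x_{1,n}$. Adjoining $c_n$ in the empty gap and the origin (a zero of $z\tilde{L}_{n}^{(\delta+2)}(z;q)$) just below $x_{1,n}$, I would verify that
\begin{equation*}
z_{1,n}<0<z_{2,n}<x_{1,n}<\cdots<x_{j,n}<c_n<x_{j+1,n}<\cdots<z_{n,n}<x_{n,n}
\end{equation*}
alternates perfectly between the zeros of $z\tilde{L}_{n}^{(\delta+2)}(z;q)$ and those of $(z-c_n)\tilde{L}_{n}^{(\delta)}(z;q)$, which is the asserted interlacing.

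The hard part is the bookkeeping in the second case: one must exclude the rival even allocation in which the $c_n$-gap holds two zeros, and confirm that the single displaced positive zero is precisely $z_{2,n}\in(0,x_{1,n})$, so that inserting the origin restores strict alternation. This is exactly where the parity data from the sign analysis must be reconciled with the exact count $n-1$ of positive zeros and with the boundary positions $z_{1,n}<0$ and $z_{n,n}<x_{n,n}$; by contrast the sign computations themselves are routine and parallel those already carried out in Theorem \ref{s3t1}.
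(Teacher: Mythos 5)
Your proof follows essentially the same route as the paper's: evaluate the contiguous relation \eqref{s3t1e1} at consecutive zeros of $\tilde{L}_{n}^{(\delta+2)}(z;q)$, use the interlacing \eqref{s3t1e3} to make the $\tilde{L}_{n}^{(\delta+1)}$ factor negative, and then count the $n-1$ positive zeros of $\tilde{L}_{n}^{(\delta)}(z;q)$ against the $n-1$ gaps, splitting on whether $c_n$ lies in one of them and using $z_{1,n}<0<x_{1,n}$ and $z_{n,n}<y_{n,n}<x_{n,n}$ to place the displaced zero below $x_{1,n}$ in the second case. The only addition is your explicit check that coprimality forces $c_n\neq x_{k,n}$, which the paper leaves implicit.
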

\begin{proof}
Evaluating \eqref{s3t1e1} at the consecutive zeros $x_{i,n}$ and $x_{i+1,n}, i\in\{1,\cdots,n-1\},$
of $\tilde{L}_{n}^{(\delta+2)}(z;q),$ we obtain
\begin{multline}
(x_{i,n}-c_n) (x_{i+1,n}-c_n) \tilde{L}_{n}^{(\delta+1)}(x_{i,n};q) \tilde{L}_{n}^{(\delta+1)}(x_{i+1,n};q)\\
= \left({1-q^{\delta+n+1}\over q^{\delta+2n+1}}\right)^2 \tilde{L}_{n}^{(\delta)}(x_{i,n};q) \tilde{L}_{n}^{(\delta)}(x_{i+1,n};q).\label{s3t2e1}
\end{multline}
It is clear from \eqref{s3t1e3} that,
$\tilde{L}_{n}^{(\delta+1)}(x_{i,n};q)\tilde{L}_{n}^{(\delta+1)}(x_{i+1,n};q)<0$
for every $i\in\{1,\cdots,n-1\}.$
If for each $i\in\{1,\cdots,n-1\},$ $c_n\notin(x_{i,n},x_{i+1,n}),$
then the left-hand side of \eqref{s3t2e1} is negative, whence
$\tilde{L}_{n}^{(\delta)}(x_{i,n};q)\tilde{L}_{n}^{(\delta)}(x_{i+1,n};q)<0.$
Since $\tilde{L}_{n}^{(\delta+2)}(z;q)$ has positive zeros
and the least zero of $\tilde{L}_{n}^{(\delta)}(z;q)$ is negative,
the only possible arrangement of zeros is
\[z_{1,n}<0<x_{1,n}<z_{2,n}<x_{2,n}<\cdots<z_{n,n}<x_{n,n}<c_n,\]
which shows the first part of the theorem.

On the other hand, if $c_n\in(x_{j,n},x_{j+1,n})$ for one $j,$ $j\in\{1,\cdots,n-1\},$
then we deduce from \eqref{s3t2e1} that,
$\tilde{L}_{n}^{(\delta)}(x_{j,n};q)\tilde{L}_{n}^{(\delta)}(x_{j+1,n};q)>0$
for one $j, j\in\{1,\cdots,n-1\}.$
This shows that $\tilde{L}_{n}^{(\delta)}(z;q)$
has $n-2$ sign changes between the consecutive zeros of $\tilde{L}_{n}^{(\delta+2)}(z;q)$ in $(0,\infty)$
except for the one interval $(x_{j,n},x_{j+1,n})$ containing the point $c_n.$
Since $\tilde{L}_{n}^{(\delta)}(z;q)$ has $n-1$ positive zeros
and $n-2$ sign changes in $(0,\infty),$ no zero of $\tilde{L}_{n}^{(\delta)}(z;q)$
lies in $(x_{j,n},x_{j+1,n})$ that contains $c_n$ and one zero of
$\tilde{L}_{n}^{(\delta)}(z;q)$ is either $<x_{1,n}$ or $>x_{n,n}.$
From \eqref{s1e5} and \eqref{s3t1e3}, we have $z_{n,n}<y_{n,n}<x_{n,n}.$
Hence, the only option is that the least positive zero $z_{2,n}$ of $\tilde{L}_{n}^{(\delta)}(z;q)$ is $<x_{1,n}.$
Since exactly one zero of $\tilde{L}_{n}^{(\delta)}(z;q)$ is negative,
therefore, the zeros of $(z-c_n) \tilde{L}_{n}^{(\delta)}(z;q)$
and $z \tilde{L}_{n}^{(\delta+2)}(z;q)$ interlace.
This proves the theorem.
\end{proof}
We analyze the interlacing properties between the zeros of
$\tilde{L}_{n}^{(\delta)}(z;q)$ and $\tilde{L}_{n-1}^{(\delta+2)}(z;q)$
in the next result.
\begin{theorem}\label{s3t3}
For $n\geq 2, n\in\mathbb{N},$
the zeros of $z \tilde{L}_{n-1}^{(\delta+2)}(z;q)$
and $\tilde{L}_{n}^{(\delta)}(z;q)$ interlace.
\end{theorem}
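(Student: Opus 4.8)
The plan is to reduce the statement to a sign-counting argument governed by a single mixed $q$-contiguous relation that writes $\tilde{L}_{n}^{(\delta)}(z;q)$ in terms of the two lower families $\tilde{L}_{n-1}^{(\delta+1)}(z;q)$ and $\tilde{L}_{n-1}^{(\delta+2)}(z;q)$. First I would take the relation \cite[(4.14)]{Moak1981} used in the proof of Theorem \ref{s2t1} with $n$ replaced by $n-1$, namely $\tilde{L}_{n}^{(\delta)}(z;q)=-b_n\tilde{L}_{n-1}^{(\delta)}(z;q)+z\tilde{L}_{n-1}^{(\delta+1)}(z;q)$ with $b_n$ as in Theorem \ref{s2t3}, and combine it with \eqref{s3t1e1} taken at $n-1$, whose $\tilde{L}_{n-1}^{(\delta)}$-coefficient is again $b_n$. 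Solving that equation for $b_n\tilde{L}_{n-1}^{(\delta)}(z;q)$ and substituting cancels the degree-$(n-1)$ term at level $\delta$ and leaves
\begin{equation*}
\tilde{L}_{n}^{(\delta)}(z;q)=c_{n-1}\,\tilde{L}_{n-1}^{(\delta+1)}(z;q)+z\,\tilde{L}_{n-1}^{(\delta+2)}(z;q),
\end{equation*}
where $c_{n-1}$ is the constant of Theorem \ref{s3t1} evaluated at $n-1$; since $\delta\in(-2,-1)$ and $0<q<1$ give $q^{\delta+1}>1$, one has $c_{n-1}>0$.

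A useful consequence of this identity is that co-primality holds automatically, which is why no such hypothesis appears in the statement: a common nonzero root $z_0$ of $\tilde{L}_{n}^{(\delta)}$ and $\tilde{L}_{n-1}^{(\delta+2)}$ would force $\tilde{L}_{n-1}^{(\delta+1)}(z_0;q)=0$, contradicting the strict interlacing of $\tilde{L}_{n-1}^{(\delta+1)}$ and $\tilde{L}_{n-1}^{(\delta+2)}$ in \eqref{s3t1e3} (read at level $n-1$). Evaluating the identity at consecutive zeros $x_{i,n-1},x_{i+1,n-1}$ of $\tilde{L}_{n-1}^{(\delta+2)}(z;q)$, where the last term vanishes, gives
\begin{equation*}
\tilde{L}_{n}^{(\delta)}(x_{i,n-1};q)\,\tilde{L}_{n}^{(\delta)}(x_{i+1,n-1};q)=c_{n-1}^2\,\tilde{L}_{n-1}^{(\delta+1)}(x_{i,n-1};q)\,\tilde{L}_{n-1}^{(\delta+1)}(x_{i+1,n-1};q).
\end{equation*}
Because \eqref{s3t1e3} at level $n-1$ puts exactly one zero $y_{i+1,n-1}$ of $\tilde{L}_{n-1}^{(\delta+1)}$ strictly inside $(x_{i,n-1},x_{i+1,n-1})$, the right-hand side is negative, so $\tilde{L}_{n}^{(\delta)}(z;q)$ changes sign in each of the $n-2$ intervals $(x_{i,n-1},x_{i+1,n-1})$, $i=1,\dots,n-2$.

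Since $\tilde{L}_{n}^{(\delta)}(z;q)$ has exactly $n-1$ simple positive zeros, these sign changes force precisely one such zero in each of the $n-2$ intervals, leaving a single positive zero outside $(x_{1,n-1},x_{n-1,n-1})$ together with the unique negative zero $z_{1,n}<0$ from \eqref{s1e5}. To place the stray positive zero below $x_{1,n-1}$ I would evaluate the identity at $x_{n-1,n-1}$, getting $\tilde{L}_{n}^{(\delta)}(x_{n-1,n-1};q)=c_{n-1}\tilde{L}_{n-1}^{(\delta+1)}(x_{n-1,n-1};q)>0$, the sign following since $x_{n-1,n-1}>y_{n-1,n-1}$ lies beyond every zero of the monic $\tilde{L}_{n-1}^{(\delta+1)}$. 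As the sign of the monic $\tilde{L}_{n}^{(\delta)}$ at a point is $(-1)^k$ with $k$ the number of larger zeros, this positivity makes $k$ even at $x_{n-1,n-1}$, which excludes $z_{n,n}>x_{n-1,n-1}$ and drops the stray zero into $(0,x_{1,n-1})$. Adjoining the root at the origin from the factor $z$ then yields
\begin{equation*}
z_{1,n}<0<z_{2,n}<x_{1,n-1}<z_{3,n}<x_{2,n-1}<\cdots<z_{n,n}<x_{n-1,n-1},
\end{equation*}
the claimed interlacing of $z\tilde{L}_{n-1}^{(\delta+2)}(z;q)$ with $\tilde{L}_{n}^{(\delta)}(z;q)$.

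The hard part will be the last paragraph's bookkeeping rather than the algebra: the sign criterion only delivers an odd number of zeros per subinterval, so I must lean on the exact count of $n-1$ simple positive zeros to rule out any subinterval swallowing three, and then the parity computation at $x_{n-1,n-1}$ is what forces the remaining zero to the left instead of the right. Checking $c_{n-1}>0$ and the strictness that keeps the points $0$ and $x_{i,n-1}$ from ever coinciding with a zero of $\tilde{L}_{n}^{(\delta)}(z;q)$ are the only other, routine, verifications.
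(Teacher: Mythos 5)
Your proof is correct, and it runs in the ``dual'' direction to the paper's. The paper uses the relation \eqref{s3t3e1}, which expresses $\bigl(\tfrac{1-q^{\delta+n+1}}{1-q^n}\bigr)\tilde{L}_{n}^{(\delta)}(z;q)$ through $\tilde{L}_{n}^{(\delta+1)}(z;q)$ and $z\tilde{L}_{n-1}^{(\delta+2)}(z;q)$, evaluates it at consecutive zeros of the quasi-orthogonal polynomial $\tilde{L}_{n}^{(\delta)}(z;q)$, and counts sign changes of $\tilde{L}_{n-1}^{(\delta+2)}(z;q)$: none in $(z_{1,n},z_{2,n})$ (since $z_{1,n}z_{2,n}<0$), one in each $(z_{i,n},z_{i+1,n})$ for $i\geq 2$, and the leftover zero forced above $z_{n,n}$ because all zeros of $\tilde{L}_{n-1}^{(\delta+2)}$ are positive. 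You instead eliminate $\tilde{L}_{n-1}^{(\delta)}$ between Moak's (4.14) at level $n-1$ and \eqref{s3t1e1} at level $n-1$ to get the (correct, and easily verified, e.g.\ for $n=2$) identity $\tilde{L}_{n}^{(\delta)}(z;q)=c_{n-1}\tilde{L}_{n-1}^{(\delta+1)}(z;q)+z\tilde{L}_{n-1}^{(\delta+2)}(z;q)$, evaluate at consecutive zeros of the orthogonal polynomial $\tilde{L}_{n-1}^{(\delta+2)}(z;q)$, and count sign changes of $\tilde{L}_{n}^{(\delta)}(z;q)$; the input you need is the classical interlacing \eqref{s3t1e3} at level $n-1$ rather than \eqref{s1e5}. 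Your route costs an extra endgame step (the parity/sign evaluation at $x_{n-1,n-1}$ to push the stray positive zero below $x_{1,n-1}$, where the paper gets the analogous conclusion for free from positivity of the zeros of $\tilde{L}_{n-1}^{(\delta+2)}$), but it buys a cleaner byproduct: the pointwise formula $\tilde{L}_{n}^{(\delta)}(x_{i,n-1};q)=c_{n-1}\tilde{L}_{n-1}^{(\delta+1)}(x_{i,n-1};q)$ with $c_{n-1}>0$ immediately gives the co-primality of $\tilde{L}_{n}^{(\delta)}(z;q)$ and $\tilde{L}_{n-1}^{(\delta+2)}(z;q)$ (which the paper only records, with a slip of wording, in Remark \ref{s2r1}) and determines the exact sign of $\tilde{L}_{n}^{(\delta)}$ at every zero of $\tilde{L}_{n-1}^{(\delta+2)}$. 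Both arguments land on the same configuration $z_{1,n}<0<z_{2,n}<x_{1,n-1}<\cdots<z_{n,n}<x_{n-1,n-1}$.
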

\begin{proof}
Using \cite[(4.12) and (4.14)]{Moak1981}, we obtain
\begin{equation}\label{s3t3e1}
\left({1-q^{\delta+n+1} \over 1-q^n}\right) \tilde{L}_{n}^{(\delta)}(z;q)
= {q^n (1-q^{\delta+1})\over (1-q^{n})} \tilde{L}_{n}^{(\delta+1)}(z;q) + z \tilde{L}_{n-1}^{(\delta+2)}(z;q).
\end{equation}
Evaluating \eqref{s3t3e1} at the consecutive zeros $z_{i,n}$ and $z_{i+1,n}, i\in\{1,\cdots,n-1\},$
of $\tilde{L}_{n}^{(\delta)}(z;q)$ yields
\begin{equation}\label{s3t3e2}
{q^{2n} (1-q^{\delta+1})^2 \over (1-q^n)^2} \tilde{L}_{n}^{(\delta+1)}(z_{i,n};q) \tilde{L}_{n}^{(\delta+1)}(z_{i+1,n};q)
= z_{i,n} z_{i+1,n} \tilde{L}_{n-1}^{(\delta+2)}(z_{i,n};q) \tilde{L}_{n-1}^{(\delta+2)}(z_{i+1,n};q).
\end{equation}
For any $i\in\{1,\cdots,n-1\},$ \eqref{s1e5} implies
$\tilde{L}_{n}^{(\delta+1)}(z_{i,n};q) \tilde{L}_{n}^{(\delta+1)}(z_{i+1,n};q)<0.$
Also for $i=1,$ $z_{i,n} z_{i+1,n}<0$ while $z_{i,n} z_{i+1,n}>0$ for $i=2,\cdots,n-1.$
Therefore, from \eqref{s3t3e2}, we deduce that $\tilde{L}_{n-1}^{(\delta+2)}(z;q)$
does not change sign in $(z_{1,n},z_{2,n})$ and
changes sign in $(z_{i,n},z_{i+1,n})$ for every $i\in\{2,\cdots,n-1\}.$
Thus, we have minimum $n-2$ zeros of $\tilde{L}_{n-1}^{(\delta+2)}(z;q)$ in $(z_{2,n},z_{n,n})$
and in $(z_{1,n},z_{2,n}),$ there is either no zero or two zeros of $\tilde{L}_{n-1}^{(\delta+2)}(z;q).$
Since $\tilde{L}_{n-1}^{(\delta+2)}(z;q)$ has exactly $n-1$ zeros, it has no zero in $(z_{1,n},z_{2,n}).$
Hence, the remaining one zero must be larger than $z_{n,n}.$
Since $z_{1,n}<0<z_{2,n},$ the given interlacing property holds true.
This proves the theorem.
\end{proof}
\begin{remark}\label{s2r1}\normalfont
From \eqref{s1e5}, $\tilde{L}_{n}^{(\delta)}(z;q)$ and $\tilde{L}_{n}^{(\delta+1)}(z;q)$ are co-prime.
This implies, $\tilde{L}_{n}^{(\delta)}(z;q)$ and $\tilde{L}_{n-1}^{(\delta+2)}(z;q)$ cannot be co-prime.
Otherwise, any common zero of these polynomials is also a zero of $\tilde{L}_{n}^{(\delta+1)}(z;q).$
\end{remark}
Next theorem shows the interlacing between the zeros of
$\tilde{L}_{n}^{(\delta)}(z;q)$ and $\tilde{L}_{n-2}^{(\delta+2)}(z;q).$
\begin{theorem}\label{s3t4}
For $n\geq 3, n\in\mathbb{N},$
the zeros of $\tilde{L}_{n}^{(\delta)}(z;q)$ and $\tilde{L}_{n-2}^{(\delta+2)}(z;q)$ interlace in $(0,\infty).$
Moreover, the zeros of $z(z+c_{n-1})\tilde{L}_{n-2}^{(\delta+2)}(z;q)$
and $\tilde{L}_{n}^{(\delta)}(z;q)$ interlace,
where $c_n$ is given in Theorem \ref{s3t1}.
\end{theorem}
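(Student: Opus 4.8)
The plan is to base the whole argument on the single mixed contiguous relation
\begin{equation*}
\tilde{L}_{n}^{(\delta)}(z;q) = (z+c_{n-1})\, \tilde{L}_{n-1}^{(\delta+1)}(z;q)
- {1-q^{n-1}\over q^{\delta+2n-1}}\, z\, \tilde{L}_{n-2}^{(\delta+2)}(z;q),
\end{equation*}
where $c_{n-1}=-{1-q^{\delta+1}\over q^{\delta+n}}>0$ for $\delta\in(-2,-1)$. I would derive it by eliminating $\tilde{L}_{n}^{(\delta+1)}(z;q)$ and $\tilde{L}_{n-1}^{(\delta+2)}(z;q)$ among \eqref{s3t3e1}, the relation \eqref{s2t3e1} with $\delta$ replaced by $\delta+1$, and \cite[(4.14)]{Moak1981} with $(n,\delta)$ replaced by $(n-1,\delta+1)$. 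The decisive simplification is that the several coefficients of $\tilde{L}_{n-1}^{(\delta+1)}(z;q)$ produced by the elimination collapse, via the identity ${q^{n}(1-q^{\delta+1})\over 1-q^{n}}+1={1-q^{\delta+n+1}\over 1-q^{n}}$, into the single linear factor $z+c_{n-1}$; this is exactly what produces the factor $(z+c_{n-1})$ in the statement.

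I would first note that $\tilde{L}_{n}^{(\delta)}(z;q)$ and $\tilde{L}_{n-2}^{(\delta+2)}(z;q)$ are automatically co-prime, so no hypothesis is needed. Indeed, any common zero $z_0$ satisfies $z_0>0$, since $\tilde{L}_{n-2}^{(\delta+2)}(z;q)$ is orthogonal and hence has all its zeros in $(0,\infty)$; the displayed relation then forces $(z_0+c_{n-1})\tilde{L}_{n-1}^{(\delta+1)}(z_0;q)=0$, and as $z_0+c_{n-1}>0$ we would get $\tilde{L}_{n-1}^{(\delta+1)}(z_0;q)=0$, contradicting the fact that $\tilde{L}_{n-1}^{(\delta+1)}(z;q)$ and $\tilde{L}_{n-2}^{(\delta+2)}(z;q)$ interlace (Remark \ref{s1r1} for the parameter $\delta+1$) and so share no zero. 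Evaluating the relation at consecutive positive zeros $z_{i,n},z_{i+1,n}$, $i\in\{2,\dots,n-1\}$, of $\tilde{L}_{n}^{(\delta)}(z;q)$ and multiplying gives
\begin{multline*}
(z_{i,n}+c_{n-1})(z_{i+1,n}+c_{n-1})\, \tilde{L}_{n-1}^{(\delta+1)}(z_{i,n};q)\tilde{L}_{n-1}^{(\delta+1)}(z_{i+1,n};q)\\
= \left({1-q^{n-1}\over q^{\delta+2n-1}}\right)^2 z_{i,n}z_{i+1,n}\, \tilde{L}_{n-2}^{(\delta+2)}(z_{i,n};q)\tilde{L}_{n-2}^{(\delta+2)}(z_{i+1,n};q).
\end{multline*}
Here the left-hand side is negative: $z_{i,n}+c_{n-1}>0$, while the interlacing of $\tilde{L}_{n}^{(\delta)}(z;q)$ and $\tilde{L}_{n-1}^{(\delta+1)}(z;q)$ (Remark \ref{s1r1} with $n\mapsto n-1$) makes $\tilde{L}_{n-1}^{(\delta+1)}(z_{i,n};q)\tilde{L}_{n-1}^{(\delta+1)}(z_{i+1,n};q)<0$. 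Since $z_{i,n}z_{i+1,n}>0$, it follows that $\tilde{L}_{n-2}^{(\delta+2)}(z;q)$ changes sign on each of the $n-2$ intervals $(z_{i,n},z_{i+1,n})$; as this polynomial has exactly $n-2$ positive zeros, each interval contains precisely one of them, which yields the interlacing in $(0,\infty)$.

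For the refined statement I would place the single negative zero $z_{1,n}$ by evaluating the relation at $z=-c_{n-1}$, where the $\tilde{L}_{n-1}^{(\delta+1)}$ term vanishes:
\begin{equation*}
\tilde{L}_{n}^{(\delta)}(-c_{n-1};q)={(1-q^{n-1})\,c_{n-1}\over q^{\delta+2n-1}}\, \tilde{L}_{n-2}^{(\delta+2)}(-c_{n-1};q).
\end{equation*}
Since $\tilde{L}_{n-2}^{(\delta+2)}(z;q)$ is monic with all zeros positive, $\operatorname{sgn}\tilde{L}_{n-2}^{(\delta+2)}(-c_{n-1};q)=(-1)^{n-2}$, and the prefactor is positive, so $\operatorname{sgn}\tilde{L}_{n}^{(\delta)}(-c_{n-1};q)=(-1)^{n}$. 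This is precisely the sign that the monic polynomial $\tilde{L}_{n}^{(\delta)}(z;q)$ takes to the left of its smallest zero, whereas on $(z_{1,n},z_{2,n})$ its sign is $(-1)^{n-1}$; since $-c_{n-1}<0<z_{2,n}$, this forces $-c_{n-1}<z_{1,n}$. Combining this with the positive interlacing gives
\begin{equation*}
-c_{n-1}<z_{1,n}<0<z_{2,n}<x_{1,n-2}<z_{3,n}<\cdots<x_{n-2,n-2}<z_{n,n},
\end{equation*}
that is, the zeros of $z(z+c_{n-1})\tilde{L}_{n-2}^{(\delta+2)}(z;q)$ interlace with those of $\tilde{L}_{n}^{(\delta)}(z;q)$. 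I expect the only real obstacle to be the first step: executing the three-fold elimination cleanly and confirming that the coefficient of $\tilde{L}_{n-1}^{(\delta+1)}(z;q)$ collapses exactly to $z+c_{n-1}$ and that the coefficient of $z\tilde{L}_{n-2}^{(\delta+2)}(z;q)$ is exactly $-(1-q^{n-1})/q^{\delta+2n-1}$. Once this relation is in hand, the sign-counting and the endpoint evaluation are routine and run parallel to the proofs of Theorems \ref{s2t3} and \ref{s3t3}.
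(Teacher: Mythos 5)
Your argument is correct, but it runs on a different contiguous relation than the paper's. The paper works from \eqref{s3t4e1}, which expresses $z^2\tilde{L}_{n-2}^{(\delta+2)}(z;q)$ in terms of $\tilde{L}_{n}^{(\delta)}(z;q)$ and $(z+c_{n-1})\tilde{L}_{n-1}^{(\delta)}(z;q)$, so the sign bookkeeping at consecutive zeros is driven by Theorem \ref{s2t1} (the signs of the \emph{quasi-orthogonal} $\tilde{L}_{n-1}^{(\delta)}$ at the $z_{i,n}$, which flip between the $i=1$ and $i\geq 2$ cases), and the location of $-c_{n-1}$ is extracted indirectly from the sign of the product over the pair $(z_{1,n},z_{2,n})$. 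You instead use the identity $\tilde{L}_{n}^{(\delta)}(z;q)=(z+c_{n-1})\tilde{L}_{n-1}^{(\delta+1)}(z;q)-\frac{1-q^{n-1}}{q^{\delta+2n-1}}\,z\,\tilde{L}_{n-2}^{(\delta+2)}(z;q)$, whose third member is the \emph{orthogonal} $\tilde{L}_{n-1}^{(\delta+1)}$, so the sign input is the uniform interlacing of Remark \ref{s1r1}; your relation does hold — the elimination you sketch collapses, via $q^{n}(1-q^{\delta+1})+(1-q^{n})=1-q^{\delta+n+1}$, to the intermediate identity $\tilde{L}_{n}^{(\delta)}=\tilde{L}_{n}^{(\delta+1)}+\frac{1-q^{n}}{q^{\delta+2n}}\tilde{L}_{n-1}^{(\delta+1)}$, and combining this with \eqref{s2t3e1} at parameter $\delta+1$ gives exactly the displayed coefficients (I checked the result against the recurrence for small $n$). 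Your endpoint step is also cleaner than the paper's: evaluating at $z=-c_{n-1}$ kills the middle term and the sign $(-1)^{n}$ of the monic $\tilde{L}_{n}^{(\delta)}$ there immediately forces $-c_{n-1}<z_{1,n}$, whereas the paper argues through the non-vanishing of $\tilde{L}_{n-2}^{(\delta+2)}$ on $(z_{1,n},z_{2,n})$; as a bonus your computation reproves the outermost inequality of Theorem \ref{s4t1} directly. Two small points of hygiene: your appeal to ``Remark \ref{s1r1} for the parameter $\delta+1$'' in the co-primality step is really an appeal to the classical (orthogonal-range) interlacing of $\tilde{L}_{n-1}^{(\delta+1)}$ and $\tilde{L}_{n-2}^{(\delta+2)}$, which should be cited from \cite[Theorem 2.18]{Tcheutia2018} or Moak rather than from the quasi-orthogonal remark; and you should note explicitly that $z_{i,n}+c_{n-1}>0$ is only used for $i\geq 2$, which is fine since you only multiply over positive zeros. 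Neither affects correctness.
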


\begin{proof}
Using \eqref{s2t3e1} and \cite[(4.14) and (4.15)]{Moak1981}, we get
\begin{equation}\label{s3t4e1}
\left({1-q^{n-1}\over 1-q^{\delta+n}}\right) z^2 \tilde{L}_{n-2}^{(\delta+2)}(z;q)
= {q^{\delta+2n}\over 1-q^{\delta+n}} c_{n} \tilde{L}_{n}^{(\delta)}(z;q) + (z+c_{n-1}) \tilde{L}_{n-1}^{(\delta)}(z;q).
\end{equation}
Since $\tilde{L}_{n}^{(\delta)}(z;q)$ and $\tilde{L}_{n-1}^{(\delta)}(z;q)$ are co-prime in $(0,\infty),$
from \eqref{s3t4e1}, the only possible common zero of $\tilde{L}_{n}^{(\delta)}(z;q)$
and $\tilde{L}_{n-2}^{(\delta+2)}(z;q)$ in $(0,\infty)$ is $-c_{n-1}.$
But for $\delta\in(-2,-1)$ and $0<q<1,$ $-c_{n-1}<0$
and all the zeros of $\tilde{L}_{n-2}^{(\delta+2)}(z;q)$ are positive.
This implies, the polynomials $\tilde{L}_{n}^{(\delta)}(z;q)$ and $\tilde{L}_{n-2}^{(\delta+2)}(z;q)$
are co-prime and hence, $z_{i,n}\neq -c_{n-1}$ for any $i\in\{1,\cdots,n\}.$
Evaluating \eqref{s3t4e1} at the consecutive zeros $z_{i,n}$ and $z_{i+1,n}, i\in\{2,\cdots,n-1\},$
of $\tilde{L}_{n}^{(\delta)}(z;q),$ we have
\begin{equation}\label{s3t4e2}
{\tilde{L}_{n-2}^{(\delta+2)}(z_{i,n};q)\tilde{L}_{n-2}^{(\delta+2)}(z_{i+1,n};q) \over
\tilde{L}_{n-1}^{(\delta)}(z_{i,n};q)\tilde{L}_{n-1}^{(\delta)}(z_{i+1,n};q)}
= {(1-q^{\delta+n})^2 (z_{i,n}+c_{n-1}) (z_{i+1,n}+c_{n-1}) \over (1-q^{n-1})^2 z_{i,n}^2 z_{i+1,n}^2}.
\end{equation}
We know from Theorem \ref{s2t1} that,
$\tilde{L}_{n-1}^{(\delta)}(z_{i,n};q) \tilde{L}_{n-1}^{(\delta)}(z_{i+1,n};q)<0$ for $i\in\{2,\cdots,n-1\}.$
For $\delta\in(-2,-1),$ $-c_{n-1}<0,$ so $-c_{n-1}<z_{2,n}$ and hence for any $i\in\{2,\cdots,n-1\},$
the right-hand side of \eqref{s3t4e2} is positive.
Therefore, we deduce from \eqref{s3t4e2} that, for each $i\in\{2,\cdots,n-1\},$
$\tilde{L}_{n-2}^{(\delta+2)}(z_{i,n};q)$ and
$\tilde{L}_{n-2}^{(\delta+2)}(z_{i+1,n};q)$ have different sign.
This means, the zeros of $\tilde{L}_{n-2}^{(\delta+2)}(z;q)$
and $\tilde{L}_{n}^{(\delta)}(z;q)$ interlace in $(0,\infty).$

Now evaluating \eqref{s3t4e1} at the zeros $z_{1,n}$ and $z_{2,n},$ of $\tilde{L}_{n}^{(\delta)}(z;q),$ we obtain
\begin{equation}\label{s3t4e3}
{\tilde{L}_{n-2}^{(\delta+2)}(z_{1,n};q)\tilde{L}_{n-2}^{(\delta+2)}(z_{2,n};q)\over
\tilde{L}_{n-1}^{(\delta)}(z_{1,n};q)\tilde{L}_{n-1}^{(\delta)}(z_{2,n};q)}
= {(1-q^{\delta+n})^2 (z_{1,n}+c_{n-1})(z_{2,n}+c_{n-1})\over
(1-q^{n-1})^2 z_{1,n}^2 z_{2,n}^2}.
\end{equation}
Since the zeros of $\tilde{L}_{n-2}^{(\delta+2)}(z;q)$ lie in $(z_{2,n},z_{n,n}),$
no zero of $\tilde{L}_{n-2}^{(\delta+2)}(z;q)$ are in $(z_{1,n},z_{2,n})$
which means it does not change sign in $(z_{1,n},z_{2,n}).$
From Theorem \ref{s2t1},
$\tilde{L}_{n-1}^{(\delta)}(z_{1,n};q) \tilde{L}_{n-1}^{(\delta)}(z_{2,n};q)$ $>0.$
Therefore, from \eqref{s3t4e3} we conclude that $-c_{n-1}\notin(z_{1,n},z_{2,n})$
and since $-c_{n-1}<0,$ it follows that $-c_{n-1}<z_{1,n}.$
Thus, the only possibility of configuration of zeros is as
\begin{equation*}
-c_{n-1}<z_{1,n}<0<z_{2,n}<x_{1,n-2}<\cdots<x_{n-2,n-2}<z_{n,n}.
\end{equation*}
This follows the result.
\end{proof}
\section{Common zeros of $\boldsymbol{\tilde{L}_{n}^{(\delta)}(z;q)}$}\label{Section-3}
In this section, we deal with the $q$-Laguerre polynomials that are not co-prime.
In particular, we discuss the interlacing properties between the zeros of polynomials considered
in Theorem \ref{s2t2} and Theorem \ref{s3t2}.

\begin{theorem}\label{nst1}
  For $n\geq 3, n\in\mathbb{N},$ if $\tilde{L}_{n}^{(\delta)}(z;q)$ and $\tilde{L}_{n-2}^{(\delta)}(z;q)$ are not co-prime,
  then $a_n$ is a common zero of these polynomials and the zeros of $\tilde{L}_{n-2}^{(\delta)}(z;q)$
  interlace with the non-common zeros of $\tilde{L}_{n}^{(\delta)}(z;q)$ in $(0,\infty),$
  where $a_n$ is given in \eqref{s1e4}.
\end{theorem}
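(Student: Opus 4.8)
The plan is to separate the argument into two parts, echoing the structure of the proof of Theorem~\ref{s2t2}: first identify the forced common zero, then run a sign/degree count to get the interlacing of what remains. To locate the common zero, let $\zeta$ be any common zero of $\tilde{L}_{n}^{(\delta)}(z;q)$ and $\tilde{L}_{n-2}^{(\delta)}(z;q)$. Evaluating the three-term recurrence \eqref{s1e3} at $z=\zeta$ and using $\tilde{L}_{n}^{(\delta)}(\zeta;q)=\tilde{L}_{n-2}^{(\delta)}(\zeta;q)=0$ collapses it to $(\zeta-a_n)\tilde{L}_{n-1}^{(\delta)}(\zeta;q)=0$. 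Since $\tilde{L}_{n}^{(\delta)}(z;q)$ and $\tilde{L}_{n-1}^{(\delta)}(z;q)$ interlace by Theorem~\ref{s2t1} (with $n$ replaced by $n-1$), they are co-prime, so $\tilde{L}_{n-1}^{(\delta)}(\zeta;q)\neq 0$ and hence $\zeta=a_n$. Thus every common zero equals $a_n$; as the two polynomials are assumed not co-prime, $a_n$ is a genuine common zero, and it is simple in each polynomial because all zeros are simple.

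Next I would pin down where $a_n$ sits among the positive zeros of $\tilde{L}_{n}^{(\delta)}(z;q)$. From \eqref{s1e4}, $a_n>0$ for $\delta\in(-2,-1)$ and $0<q<1$. Being a zero of $\tilde{L}_{n-2}^{(\delta)}(z;q)$, the point $a_n$ lies between the smallest and largest positive zeros of $\tilde{L}_{n-2}^{(\delta)}(z;q)$; chaining the ordering in Theorem~\ref{s2t1} across consecutive degrees yields $z_{2,n}<z_{2,n-1}<z_{2,n-2}$ and $z_{n-2,n-2}<z_{n-1,n-1}<z_{n,n}$, so $z_{2,n}<a_n<z_{n,n}$. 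Writing $a_n=z_{j,n}$, this forces $3\le j\le n-1$, which means that deleting $a_n$ from the positive zeros of $\tilde{L}_{n}^{(\delta)}(z;q)$ leaves $z_{2,n}$ and $z_{n,n}$ as the extreme non-common positive zeros, and moreover that all $n-3$ positive zeros of $\tilde{L}_{n-2}^{(\delta)}(z;q)$ lie inside $(z_{2,n},z_{n,n})$.

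The interlacing then follows from a sign count. Evaluating \eqref{s1e3} at a positive zero $z_{i,n}\neq a_n$ shows that $\tilde{L}_{n-2}^{(\delta)}(z_{i,n};q)$ is a positive multiple of $(z_{i,n}-a_n)\tilde{L}_{n-1}^{(\delta)}(z_{i,n};q)$, so its sign equals that of $(z_{i,n}-a_n)\tilde{L}_{n-1}^{(\delta)}(z_{i,n};q)$. Because $\tilde{L}_{n}^{(\delta)}(z;q)$ and $\tilde{L}_{n-1}^{(\delta)}(z;q)$ interlace, $\tilde{L}_{n-1}^{(\delta)}$ alternates in sign at consecutive positive zeros of $\tilde{L}_{n}^{(\delta)}$, while the factor $z_{i,n}-a_n$ flips sign exactly as the index crosses $j$. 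Treating the two cases—consecutive non-common zeros on the same side of $a_n$, and the straddling pair $z_{j-1,n},z_{j+1,n}$—I find that in each of the $n-3$ intervals between consecutive non-common positive zeros $\tilde{L}_{n-2}^{(\delta)}$ takes opposite signs at the endpoints, hence contains an odd number of its zeros. Since it has exactly $n-3$ positive zeros, all inside $(z_{2,n},z_{n,n})$, each interval holds precisely one, with $a_n$ itself being the zero in the straddling interval; this is exactly the claimed interlacing with the non-common zeros of $\tilde{L}_{n}^{(\delta)}(z;q)$.

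The main obstacle is the bookkeeping across the common zero $a_n$. In the interval $(z_{j-1,n},z_{j+1,n})$ that skips $a_n$, the auxiliary polynomial $\tilde{L}_{n-1}^{(\delta)}$ undergoes two sign changes (so returns to the same sign at the endpoints), yet the factor $z-a_n$ also changes sign there; one must check carefully that these combine to still produce a net sign change of $\tilde{L}_{n-2}^{(\delta)}$ across that widened interval. The remaining delicate point is to confirm, via the degree count together with $z_{2,n}<z_{2,n-2}$ and $z_{n-2,n-2}<z_{n,n}$, that no positive zero of $\tilde{L}_{n-2}^{(\delta)}(z;q)$ lies outside $(z_{2,n},z_{n,n})$, so that the ``at least one per interval'' statement sharpens to ``exactly one.''
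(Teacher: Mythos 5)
Your argument is correct and follows essentially the same route as the paper: the three-term recurrence \eqref{s1e3} together with the co-primality of $\tilde{L}_{n}^{(\delta)}(z;q)$ and $\tilde{L}_{n-1}^{(\delta)}(z;q)$ forces any common zero to be $a_n$, and a sign analysis at consecutive non-common positive zeros of $\tilde{L}_{n}^{(\delta)}(z;q)$ yields the interlacing. The only (minor) divergence is that you locate $a_n=z_{j,n}$ with $3\leq j\leq n-1$ directly from the chained orderings of Theorem~\ref{s2t1}, which place every positive zero of $\tilde{L}_{n-2}^{(\delta)}(z;q)$ inside $(z_{2,n},z_{n,n})$, whereas the paper reaches the same conclusion by counting the intervals that must each absorb one of the $n-4$ non-common zeros of $\tilde{L}_{n-2}^{(\delta)}(z;q)$; both close the argument equally well.
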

\begin{proof}
If $\tilde{L}_{n}^{(\delta)}(z;q)$ and $\tilde{L}_{n-2}^{(\delta)}(z;q)$ have common zeros in $(0,\infty),$
then from \eqref{s1e3}, $a_n$ is the only common zero of these polynomials
since $\tilde{L}_{n}^{(\delta)}(z;q)$ and $\tilde{L}_{n-1}^{(\delta)}(z;q)$ are co-prime in $(0,\infty)$ by Theorem \eqref{s2t1}.
Suppose $z_{1,n-2}^{\prime}<z_{2,n-2}^{\prime}<\cdots<z_{n-2,n-2}^{\prime}$
are the $n-2$ non-common zeros of $\tilde{L}_{n}^{(\delta)}(z;q)$ in $(0,\infty).$
Then $a_n$ belongs to one of the intervals $\left(z_{i,n-2}^{\prime},z_{i+1,n-2}^{\prime}\right),$ $i\in\{1,\cdots,n-3\}.$
Evaluating \eqref{s1e3} at the consecutive non-common zeros $z_{i,n-2}^{\prime}$ and $z_{i+1,n-2}^{\prime},$ $i\in\{1,\cdots,n-3\},$
of $\tilde{L}_{n}^{(\delta)}(z;q),$ such that $a_n\notin \left(z_{i,n-2}^{\prime},z_{i+1,n-2}^{\prime}\right),$ we obtain
\begin{multline}\label{nst1e1}
  {\left(1-q^{n-1}\right)^2 \left(1-q^{\delta+n-1}\right)^2 \over q^{4\delta+8n-10}}
  \tilde{L}_{n-2}^{(\delta)}(z_{i,n-2}^{\prime};q) \tilde{L}_{n-2}^{(\delta)}(z_{i+1,n-2}^{\prime};q) \\
   = \left(z_{i,n-2}^{\prime}-a_n\right) \left(z_{i+1,n-2}^{\prime}-a_n\right)
   \tilde{L}_{n-1}^{(\delta)}(z_{i,n-2}^{\prime};q) \tilde{L}_{n-1}^{(\delta)}(z_{i+1,n-2}^{\prime};q).
\end{multline}
Since the positive zeros of $\tilde{L}_{n}^{(\delta)}(z;q)$ and $\tilde{L}_{n-1}^{(\delta)}(z;q)$ are interlacing,
we deduce from \eqref{nst1e1} that $\tilde{L}_{n-2}^{(\delta)}(z;q)$ has an odd number of zeros
in each of the intervals $\left(z_{i,n-2}^{\prime},z_{i+1,n-2}^{\prime}\right),$ $i\in\{1,\cdots,n-3\},$ that does not contain $a_n.$
Since $\tilde{L}_{n-2}^{(\delta)}(z;q)$ has $n-4$ non-common zeros in $(0,\infty),$
there are at most $n-4$ such intervals.
Therefore, $a_n=z_{j,n}$ where $j\in\{3,\cdots,n-1\}.$
Hence, the $n-3$ zeros of $\tilde{L}_{n-2}^{(\delta)}(z;q)$ (including $a_n$)
interlace with the $n-2$ non-common zeros of $\tilde{L}_{n}^{(\delta)}(z;q)$ in $(0,\infty).$
This proves the theorem.
\end{proof}

\begin{theorem}\label{nst2}
  Suppose $\tilde{L}_n^{(\delta)}(z;q)$ and $\tilde{L}_{n}^{(\delta+2)}(z;q)$ are not co-prime
  and the smallest positive zero of $\tilde{L}_{n}^{(\delta)}(z;q),$ that is, $z_{2,n}<c_n$ for $n\geq 2, n\in\mathbb{N},$
  where $c_n$ is given in Theorem \ref{s3t1}.
  Then the above polynomials have a common zero at $z=c_n$
  and the positive zeros of $\tilde{L}_{n}^{(\delta)}(z;q)$ interlace with the non-common zeros of $\tilde{L}_{n}^{(\delta+2)}(z;q).$
\end{theorem}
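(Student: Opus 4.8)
The plan is to follow the template of Theorem~\ref{nst1}, but now exploiting the contiguous relation \eqref{s3t1e1} tying together $\tilde{L}_{n}^{(\delta)}(z;q)$, $\tilde{L}_{n}^{(\delta+1)}(z;q)$ and $\tilde{L}_{n}^{(\delta+2)}(z;q)$. First I would identify the common zero. If $z_0$ is shared by $\tilde{L}_{n}^{(\delta)}(z;q)$ and $\tilde{L}_{n}^{(\delta+2)}(z;q)$, then evaluating \eqref{s3t1e1} at $z_0$ annihilates the left-hand side and the last term on the right, leaving $(z_0-c_n)\tilde{L}_{n}^{(\delta+1)}(z_0;q)=0$. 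Since Lemma~\ref{s1l1} shows that $\tilde{L}_{n}^{(\delta)}(z;q)$ and $\tilde{L}_{n}^{(\delta+1)}(z;q)$ are co-prime, we have $\tilde{L}_{n}^{(\delta+1)}(z_0;q)\neq 0$, so $z_0=c_n$. As $c_n>0$ for $\delta\in(-2,-1)$ and $0<q<1$, the point $c_n$ is a positive zero of both polynomials, which proves the first assertion; write $c_n=z_{j,n}=x_{m,n}$. The hypothesis $z_{2,n}<c_n$ gives $j\geq 3$, and then \eqref{s1e5} together with \eqref{s3t1e3} yields $x_{1,n}<y_{2,n}<z_{3,n}\leq c_n<y_{n,n}<x_{n,n}$, so that $2\leq m\leq n-1$; in particular $c_n$ is an interior zero of $\tilde{L}_{n}^{(\delta+2)}(z;q)$.

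Next I would run a parity argument over the non-common zeros $x_1'<\cdots<x_{n-1}'$ of $\tilde{L}_{n}^{(\delta+2)}(z;q)$ (these are the $x_{i,n}$ with $x_{m,n}=c_n$ deleted, so the deleted zero $c_n$ lies in the single gap $(x_{m-1,n},x_{m+1,n})=(x_{m-1}',x_{m}')$). Evaluating \eqref{s3t1e1} at two consecutive non-common zeros and taking the product gives \eqref{s3t2e1}, whose sign is controlled by the factor $(x-c_n)$ and by $\tilde{L}_{n}^{(\delta+1)}(z;q)$ at the two points. By the interlacing \eqref{s3t1e3} the numbers $\tilde{L}_{n}^{(\delta+1)}(x_{i,n};q)$ alternate in sign with $i$; hence on every gap whose endpoints lie on the same side of $c_n$ the product is negative, so $\tilde{L}_{n}^{(\delta)}(z;q)$ changes sign there and has an odd (at least one) number of zeros. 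On the straddling gap the extra sign flip of $(x-c_n)$, combined with the fact that $x_{m-1,n}$ and $x_{m+1,n}$ are two indices apart, instead makes the total number of zeros of $\tilde{L}_{n}^{(\delta)}(z;q)$ there odd, one of which is $c_n$; thus its number of non-common zeros in that gap is even.

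Finally I would close the count. From \eqref{s1e5} and \eqref{s3t1e3} we have $z_{n,n}<y_{n,n}<x_{n,n}=x_{n-1}'$, so no positive zero of $\tilde{L}_{n}^{(\delta)}(z;q)$ exceeds $x_{n-1}'$. Now $\tilde{L}_{n}^{(\delta)}(z;q)$ has exactly $n-2$ positive zeros besides $c_n$ to be distributed among the $n-3$ non-straddling gaps (each carrying an odd number), the straddling gap (carrying an even number of non-common zeros), and the region below $x_1'$. Since the $n-3$ non-straddling gaps already account for at least $n-3$ zeros and a single gap holding three or more would overshoot $n-2$, each non-straddling gap must hold exactly one zero, the straddling gap must hold only $c_n$, and precisely one non-common zero must fall below $x_1'$; being smallest, it is $z_{2,n}$. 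This forces the alternation $z_{2,n}<x_1'<z_{3,n}<x_2'<\cdots<z_{n,n}<x_{n-1}'$, which is exactly the claimed interlacing of the positive zeros of $\tilde{L}_{n}^{(\delta)}(z;q)$ with the non-common zeros of $\tilde{L}_{n}^{(\delta+2)}(z;q)$.

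The step I expect to be the main obstacle is this final bookkeeping: because the two zero sets have equal cardinality there is no slack, and parity alone supplies only lower bounds. The resolution is to play the even parity forced in the straddling gap against the extremal comparison $z_{n,n}<x_{n,n}$, which together squeeze every non-straddling gap down to a single zero and push the one leftover zero below $x_1'$; it is exactly here that the hypothesis $z_{2,n}<c_n$ is indispensable, since it guarantees $2\leq m\leq n-1$, so that $c_n$ is not the least positive zero and the leftover zero $z_{2,n}$ is genuinely distinct from and smaller than the common zero.
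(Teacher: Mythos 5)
Your proof is correct and follows essentially the same route as the paper: identify $c_n$ as the only possible common zero via \eqref{s3t1e1} and the co-primality of $\tilde{L}_{n}^{(\delta)}(z;q)$ and $\tilde{L}_{n}^{(\delta+1)}(z;q),$ then run the sign/parity argument of \eqref{nst2e1} over the gaps between the non-common zeros of $\tilde{L}_{n}^{(\delta+2)}(z;q)$ and close with a counting argument. The one noteworthy difference is at the end: where the paper cites Theorem \ref{s3t2} (whose co-primality hypothesis technically fails in the present setting) to place $z_{2,n}<x_{1,n},$ you instead obtain $2\leq m\leq n-1$ directly from \eqref{s1e5}, \eqref{s3t1e3} and the hypothesis $z_{2,n}<c_n,$ and then force the single leftover zero below $x_{1,n}$ purely from the parity bookkeeping (even count in the straddling gap, odd count elsewhere), which is a slightly more self-contained way to finish.
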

\begin{proof}
We have by \eqref{s1e5}, $\tilde{L}_{n}^{(\delta)}(z;q)$ and $\tilde{L}_{n}^{(\delta+1)}(z;q)$ are co-prime.
Therefore, from \eqref{s3t1e1}, $c_n$ is the only common zero of
$\tilde{L}_{n}^{(\delta)}(z;q)$ and $\tilde{L}_{n}^{(\delta+2)}(z;q)$ in $(0,\infty).$
If $x_{1,n-1}^{\prime}<x_{2,n-1}^{\prime}<\cdots<x_{n-1,n-1}^{\prime}$
are the $n-1$ non-common zeros of $\tilde{L}_{n}^{(\delta+2)}(z;q),$
then $c_n$ lies in at most one interval $\left(x_{i,n-1}^{\prime},x_{i+1,n-1}^{\prime}\right),$ $i\in\{1,\cdots,n-2\}.$
Evaluating \eqref{s3t1e1} at the consecutive non-common zeros $x_{i,n-1}^{\prime}$ and $x_{i+1,n-1}^{\prime},$
$i\in\{1,\cdots,n-2\},$ of $\tilde{L}_{n}^{(\delta+2)}(z;q)$ such that $c_n\notin\left(x_{i,n-1}^{\prime},x_{i+1,n-1}^{\prime}\right),$ yields
\begin{multline}
\left(x_{i,n-1}^{\prime}-c_n\right) \left(x_{i+1,n-1}^{\prime}-c_n\right)
\tilde{L}_{n}^{(\delta+1)} \left(x_{i,n-1}^{\prime};q\right) \tilde{L}_{n}^{(\delta+1)} \left(x_{i+1,n-1}^{\prime};q\right)\\
= \left({1-q^{\delta+n+1}\over q^{\delta+2n+1}}\right)^2
  \tilde{L}_{n}^{(\delta)} \left(x_{i,n-1}^{\prime};q\right) \tilde{L}_{n}^{(\delta)} \left(x_{i+1,n-1}^{\prime};q\right). \label{nst2e1}
\end{multline}
We have from \eqref{s3t1e3} that,
the zeros of $\tilde{L}_{n}^{(\delta+1)}(z;q)$ and $\tilde{L}_{n}^{(\delta+2)} (z;q)$ interlace.
Thus, we deduce from \eqref{nst2e1} that
$\tilde{L}_{n}^{(\delta)}(z;q)$ has an odd number of zeros in
$\left(x_{i,n-1}^{\prime},x_{i+1,n-1}^{\prime}\right)$ for each $i\in\{1,\cdots,n-2\},$
that does not contain the point $c_n.$
From Theorem \ref{s3t2}, we have $z_{2,n}<x_{1,n}$ when $c_n\in\left(x_{1,n},x_{n,n}\right).$
Since $\tilde{L}_{n}^{(\delta)}(z;q)$ has $n-3$ non-common zeros in $(x_{1,n},x_{n,n}),$
there are at most $n-3$ such intervals.
Hence, $c_n=x_{j,n}$ where $j\in\{2,\cdots,n-1\}.$
Thus, the $n-2$ non-common positive zeros of $\tilde{L}_{n}^{(\delta)}(z;q),$ together with the point $c_n,$
interlace with the $n-1$ non-common zeros of $\tilde{L}_{n}^{(\delta+2)}(z;q).$
This completes the proof.
\end{proof}
We illustrate Theorem \ref{nst1} and Theorem \ref{nst2} by the following numeric examples using {\it Mathematica 12} :
\begin{example}\label{ex1}\normalfont
If we take $q=0.997, n=26$ and $\delta=-1.121695,$ the polynomials $\tilde{L}_{24}^{(-1.121695)}(z;0.997)$
and $\tilde{L}_{26}^{(-1.121695)}(z;0.997)$ have a common zero in $(0,\infty),$ namely,
$a_{26}=0.167473.$
Further, the zeros of $\tilde{L}_{24}^{(-1.121695)}(z;0.997)$ interlace with
the non-common zeros of $\tilde{L}_{26}^{(-1.121695)}(z;0.997)$ in $(0,\infty).$
\end{example}
\begin{example}\label{ex2}\normalfont
When $q=0.94, n=26$ and $\delta=-1.92598,$ then $c_{26}=0.278236$ is a common zero of
$\tilde{L}_{26}^{(-1.92598)}(z;0.94)$ and $\tilde{L}_{26}^{(-1.92598+2)}(z;0.94).$
Moreover, the positive zeros of $\tilde{L}_{26}^{(-1.92598)}(z;0.94)$
interlace with the non-common zeros of $\tilde{L}_{26}^{(-1.92598+2)}(z;0.94).$
\end{example}

\section{Bounds for the smallest zero of $\boldsymbol{\tilde{L}_{n}^{(\delta)}(z;q)}$}\label{Section-4}
We derive the inner and outer bounds for the negative zero of $\tilde{L}_{n}^{(\delta)}(z;q).$
To find the bounds, we make use of the following contiguous relations:
\begin{equation}\label{s4e1}
z^3 \tilde{L}_{n-2}^{(\delta+3)}(z;q) = {(1-q^{\delta+n+1})\over (1-q^{n-1})} b_n
\left(z-A_n\right) \tilde{L}_{n-1}^{(\delta)}(z;q)
+ \left(z-A_{n-\delta-2}\right) \tilde{L}_{n}^{(\delta)}(z;q)
\end{equation}
and
\begin{multline}\label{s4e2}
z^4 \tilde{L}_{n-2}^{(\delta+4)}(z;q) =
{A_{n-\delta-2}\ b_n ((1-q^n)+q(1-q^{\delta+n+1})) \over q^{n} (1-q^{\delta+1})} \left(z-B_n\right) \tilde{L}_{n-1}^{(\delta)}(z;q)\\
+ \left(z^2+{(1+q)(1-q^{\delta+2})\over q^{\delta+n+1}}z - {A_{n-\delta-2} (1-q^{\delta+3}) \over q^{\delta+n+1}}\right) \tilde{L}_{n}^{(\delta)}(z;q),
\end{multline}
where
$A_n=\displaystyle{(1-q^{\delta+1})(1-q^{\delta+2})\over q^{\delta+1} (1-q^{\delta+n+1})},$
$B_n=\displaystyle{(1-q^{\delta+1})(1-q^{\delta+3})\over q^{\delta+1} ((1-q^n)+q(1-q^{\delta+n+1}))}$
and $b_n$ is given in Theorem \ref{s2t3}.

\noindent Clearly, \eqref{s4e1} and \eqref{s4e2} are the
application of \cite[(4.14) and (4.15)]{Moak1981} in \eqref{s3t4e1}.

\begin{theorem}\label{s4t1}
For $n\geq 2, n\in\mathbb{N},$
the negative zero $z_{1,n}$ of $\tilde{L}_{n}^{(\delta)}(z;q)$ satisfies
\begin{equation}\label{s4t1e1}
-c_{n-1} < B_n < z_{1,n} < A_n < 0,
\end{equation}
where $A_n, B_n$ and $c_n$ are respectively given in \eqref{s4e1}, \eqref{s4e2} and in Theorem \ref{s3t1}.
\end{theorem}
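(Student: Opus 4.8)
The plan is to prove the chain $-c_{n-1}<B_n<z_{1,n}<A_n<0$ in three pieces: the two pure constant inequalities at the ends, and the localization of the single negative zero $z_{1,n}$ strictly between $B_n$ and $A_n$ using \eqref{s4e1} and \eqref{s4e2}. First I would record the signs forced by $\delta\in(-2,-1)$ and $0<q<1$: one has $1-q^{\delta+1}<0$, while $1-q^{\delta+2}$, $1-q^{\delta+3}$, $1-q^{\delta+n+1}$ and $1-q^{n-1}$ are all positive for $n\geq 2$. These immediately give $A_n<0$, $B_n<0$, $-c_{n-1}<0$, $b_n>0$, and, since $A_{n-\delta-2}=\frac{(1-q^{\delta+1})(1-q^{\delta+2})}{q^{\delta+1}(1-q^{n-1})}$, also $A_{n-\delta-2}<0$.

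For the outer inequality $-c_{n-1}<B_n$ I would argue algebraically. Dividing both sides by the negative quantity $1-q^{\delta+1}$ reverses the inequality, and clearing the positive denominators reduces the claim to $q^{1-n}\big((1-q^n)+q(1-q^{\delta+n+1})\big)>1-q^{\delta+3}$. Expanding the left side, the terms $\pm q^{\delta+3}$ cancel and the inequality collapses to $(1+q)(q^{1-n}-1)>0$, which holds because $q^{1-n}>1$ for $n\geq 2$. The inequality $A_n<0$ is just the sign count from the first paragraph.

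The core is to pin $z_{1,n}$ between $B_n$ and $A_n$, and the key device is to evaluate each contiguous relation at $z=z_{1,n}$ itself: this annihilates the $\tilde{L}_n^{(\delta)}$ term and leaves a single product whose sign fixes the position of the constant relative to $z_{1,n}$. Thus \eqref{s4e1} at $z=z_{1,n}$ becomes $z_{1,n}^3\tilde{L}_{n-2}^{(\delta+3)}(z_{1,n};q)=\frac{1-q^{\delta+n+1}}{1-q^{n-1}}\,b_n(z_{1,n}-A_n)\tilde{L}_{n-1}^{(\delta)}(z_{1,n};q)$, and \eqref{s4e2} at $z=z_{1,n}$ produces the analogous relation carrying the factor $(z_{1,n}-B_n)$. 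I then supply the remaining sign data: since $\delta+3,\delta+4>-1$, the polynomials $\tilde{L}_{n-2}^{(\delta+3)}$ and $\tilde{L}_{n-2}^{(\delta+4)}$ are orthogonal with only positive zeros, so at $z_{1,n}<0$ each has sign $(-1)^{n-2}=(-1)^n$; and by Theorem \ref{s2t1} (with $n$ replaced by $n-1$) one has $z_{1,n-1}<z_{1,n}<0<z_{2,n}<z_{2,n-1}$, placing $z_{1,n}$ in $(z_{1,n-1},z_{2,n-1})$, where $\tilde{L}_{n-1}^{(\delta)}$ has sign $(-1)^n$. Feeding these into the two evaluated relations forces $\mathrm{sign}(z_{1,n}-A_n)=-1$ and $\mathrm{sign}(z_{1,n}-B_n)=+1$, i.e. $B_n<z_{1,n}<A_n$; together with $-c_{n-1}<B_n$ and $A_n<0$ this gives \eqref{s4t1e1}.

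The main obstacle is careful sign bookkeeping rather than any deep idea. The two delicate points are the sign of $\tilde{L}_{n-1}^{(\delta)}(z_{1,n};q)$, which depends on the precise interlacing of Theorem \ref{s2t1}, and the sign of the prefactor $\frac{A_{n-\delta-2}\,b_n\,((1-q^n)+q(1-q^{\delta+n+1}))}{q^n(1-q^{\delta+1})}$ in \eqref{s4e2}, where the two negative factors $A_{n-\delta-2}$ and $1-q^{\delta+1}$ must be seen to cancel into a positive prefactor. I would also verify the boundary case $n=2$, where $\tilde{L}_{n-2}^{(\delta+3)}$ and $\tilde{L}_{n-2}^{(\delta+4)}$ degenerate to the constant $1$ and $\tilde{L}_{n-1}^{(\delta)}=\tilde{L}_1^{(\delta)}$, and check that the conventions $(-1)^{n-2}$ and $(-1)^n$ still apply. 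Notably, evaluating at $z_{1,n}$ rather than at $B_n$ sidesteps any need to analyze the quadratic coefficient of $\tilde{L}_n^{(\delta)}$ appearing in \eqref{s4e2}.
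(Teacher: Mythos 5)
Your proposal is correct and follows essentially the same route as the paper: evaluate \eqref{s4e1} and \eqref{s4e2} at $z=z_{1,n}$ to isolate the factors $(z_{1,n}-A_n)$ and $(z_{1,n}-B_n)$, determine their signs, and verify $-c_{n-1}<B_n$ by the same algebraic reduction (your $(1+q)(q^{1-n}-1)>0$ is the paper's $(1-q^n)+q(1-q^{n-2})>0$ in disguise). Your sign bookkeeping via the explicit parities $(-1)^{n-2}$ and $(-1)^n$ is in fact tidier than the paper's appeal to $\lim_{z\to-\infty}\tilde{L}_n^{(\delta)}(z;q)$, but the argument is the same.
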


\begin{proof}
At $z=z_{1,n},$ \eqref{s4e2} gives
\begin{equation}\label{s4t1e2a}
z_{1,n}^4 \tilde{L}_{n-2}^{(\delta+4)}(z_{1,n};q) =
{A_{n-\delta-2}\ b_n ((1-q^n)+q(1-q^{\delta+n+1})) \over q^{n} (1-q^{\delta+1})}
\left(z_{1,n}-B_n\right) \tilde{L}_{n-1}^{(\delta)}(z_{1,n};q).
\end{equation}
For all $n$ and $\delta,$ we have
\begin{equation}\label{s4t1e3}
\lim_{z\to -\infty} \tilde{L}_{n}^{(\delta)}(z;q) = \infty.
\end{equation}
This implies that $(z_{1,n}-B_n)$ must be positive since all other factors in \eqref{s4t1e2a} are positive,
that means
\begin{equation}\label{s4t1e4}
B_n<z_{1,n}.
\end{equation}
Similarly, we have from \eqref{s4e1} at $z=z_{1,n}$ that
\begin{equation}\label{s4t1e4a}
z_{1,n}^3 \tilde{L}_{n-2}^{(\delta+3)}(z_{1,n};q) = {(1-q^{\delta+n+1}) \over (1-q^{n-1})} b_n
\left(z_{1,n}-A_n\right) \tilde{L}_{n-1}^{(\delta)}(z_{1,n};q).
\end{equation}
Using \eqref{s1e5} and \eqref{s4t1e3} we observe that, the factor $(z_{1,n}-A_n)$ must be negative
since the left-hand side of \eqref{s4t1e4a} is negative while
the first and final factor on the right-hand side are positive,
so that
\begin{equation}\label{s4t1e5}
z_{1,n}<A_n.
\end{equation}
Now for $n\geq 2,$ we see that
\[(1-q^n)+q(1-q^{n-2})>0,\]
which implies
\[(1-q^n)+q(1-q^{\delta+n+1})>q^{n-1}(1-q^{\delta+3}).\]
Suitable interpretation of the above inequality leads to
\begin{equation}\label{s4t1e6}
-c_{n-1}<B_n.
\end{equation}
Therefore, from \eqref{s4t1e4}, \eqref{s4t1e5} and \eqref{s4t1e6}, we have the inequality \eqref{s4t1e1}.
\end{proof}
\begin{remark}\label{s4r1}\normalfont
By using the fact as in \cite[(14.21.18)]{Koekoek}, when we take limit as $q\to 1$ in \eqref{s4t1e1},
it recovers the bounds, obtained by Driver and Muldoon in \cite{Driver2015},
for the negative zero of the quasi-orthogonal Laguerre polynomials of order $1.$
\end{remark}
Using {\it Mathematica 12}, we examine the inner and outer bounds for the smallest zero of $\tilde{L}_{n}^{(\delta)}(z;q)$
for different values of the parameters  $q,$ $n$ and $\delta$ such that $\delta\in(-2,-1)$
which are listed in Table \ref{table}.
\begin{table}[h!]
\footnotesize
\centering
\caption{Inner and outer bounds of $z_{1,n}$ for different values of $q, n$ and $\delta.$}
\vspace{1mm}
\begin{tabular}{|l| c| r|r|r|r|}
\hline
$q$ & $n$ & $\delta$   & $B_n$           & $z_{1,n}$                & $A_n$            \\[1ex]
\hline
    &     & $-1.1$     & $-0.11032$      & $-0.110294$              & $-0.10681$       \\[1ex]\cline{3-6}
    & \raisebox{1.5ex}{$2$}
    & $-1.81$          & $-0.505545$     & $-0.448837$              & $-0.20526$       \\[1ex]\cline{2-6}
    &     & $-1.1$     & $-0.104315$     & $-0.104286$              & $-0.100269$      \\[1ex]\cline{3-6}
\raisebox{1.5ex}{$0.23$} & \raisebox{1.5ex}{$7$}
    & $-1.81$          & $-0.46738$      & $-0.40912$               & $-0.169571$      \\[1ex]\cline{2-6}
    &     & $-1.1$     & $-0.104311$     & $-0.104283$              & $-0.100265$      \\[1ex]\cline{3-6}
    & \raisebox{1.5ex}{$12$}
    & $-1.81$          & $-0.467357$     & $-0.409097$              & $-0.169552$      \\[1ex]
\hline
    &     & $-1.1$     & $-0.00598206$   & $-0.00597785$            & $-0.00580805$    \\[1ex]\cline{3-6}
    & \raisebox{1.5ex}{$2$}
    & $-1.81$          & $-0.0360942$    & $-0.0294349$             & $-0.0152326$     \\[1ex]\cline{2-6}
    &     & $-1.1$     & $-0.00219276$   & $-0.00219076$            & $-0.0020879$     \\[1ex]\cline{3-6}
\raisebox{1.5ex}{$0.89$} & \raisebox{1.5ex}{$7$}
    & $-1.81$          & $-0.0114906$    & $-0.00913766$            & $-0.0038382$     \\[1ex]\cline{2-6}
    &     & $-1.1$     & $-0.0016198$    & $-0.00161831$            & $-0.00153785$    \\[1ex]\cline{3-6}
    & \raisebox{1.5ex}{$12$}
    & $-1.81$          & $-0.0083227$    & $-0.00661085$            & $-0.00270734$    \\[1ex]
\hline
\end{tabular}
\label{table}
\end{table}
\newpage
\section*{Acknowledgments}
Present investigation is supported under OURIIP, Govt. of Odisha, India, Sanction Number - 1040/69/OSHEC.

\end{document}